    \crefname{conj}{conjecture}{conjectures}
    \crefname{conj}{Conjecture}{Conjectures}
\numberwithin{equation}{section}
\newtheorem{thm}{Theorem}[section]
\newtheorem*{introthm*}{Theorem}
\newtheorem{cor}[thm]{Corollary}
\newtheorem{lem}[thm]{Lemma}
\newtheorem{prop}[thm]{Proposition}
\theoremstyle{definition}
\newtheorem{defn}[thm]{Definition}
\newtheorem*{defn*}{Definition}
\newtheorem{ex}[thm]{Example}
\theoremstyle{remark}
\newtheorem{rem}[thm]{Remark}
\newtheorem{notation}[thm]{Notation}
\newcommand{\A}{{\mathbb A}}
\newcommand{\N}{{\mathbb N}}
\newcommand{\Q}{{\mathbb Q}}
\newcommand{\cF}{{\mathcal F}}
\newcommand{\fq}{{\mathfrak q}}
\def\ch{\operatorname{char}}
\def\codim{\operatorname{ht}}
\def\gin{\operatorname{gin}}
\def\ginr{\operatorname{gin_{rev}}}
\def\init{\operatorname{in}}
\def\sreg{\operatorname{sreg}}
\def\red{\operatorname{red}}
\def\reg{\operatorname{reg}}
\def\GL{\operatorname{GL}}
\def\Ass{\operatorname{Ass}}
\newcommand\MyUnderLineWithNoLinebreaks[5]{%
  % #1 - text to underline
  % #2 - color of rule
  % #3 - color of text
  % #4 - distance between rule and baseline of line of text
  % #5 - thickness of rule
  \protect\leavevmode
  {%
    \color{#2}%
    \vtop{%
      \hbox{\color{#3}#1}%
      \kern-\prevdepth
      \kern#4\relax
      \hrule height #5\relax
      \kern-#4\relax
      \kern-#5\relax
    }%
  }%
}%
\title[Axial constants and sectional regularity of homogeneous ideals]{Axial constants and sectional regularity \\of homogeneous ideals}
\author[\tiny{DeBellevue, Lebovitz, Li, Lotfi, Mohite, Pan, Pathak, Roshan Zamir, Seceleanu, Zhang}]{M.~DeBellevue, A.~Lebovitz, Y.~Li,  M.~Lotfi, S.~Mohite, \\ X.~Pan, M.~S.~Pathak,  S.~Roshan Zamir, A.~Seceleanu, X.~Zhang}
\address[Michael DeBellevue]{University of Nebraska-Lincoln}
\email{michael.debelelvue@huskers.unl.edu}
\address[Audric Lebovitz]{University of Michigan}
\email{alebovit@umich.edu}
\address[Yik Li]{Purdue University}
\email{li4536@purdue.edu}
\address[Mohamed Lotfi]{Wagner College}
\email{mohamed.lotfi@wagner.edu}
\address[Shivam Mohite]{Vanderbilt University}
\email{shivam.j.mohite@vanderbilt.edu}
\address[Xin Pan]{University of Rochester}
\email{xpan9@u.rochester.edu}
\address[Mrigank Shekhar Pathak]{Institute of Mathematics and Applications, India}
\email{masterofscience642@gmail.com}
\address[Shah Roshan Zamir]{University of Nebraska-Lincoln}
\email{sroshanzamir2@huskers.unl.edu}
\address[Alexandra Seceleanu]{University of Nebraska-Lincoln}
\email{aseceleanu@unl.edu}
\address[Sindy Xin Zhang]{University at Buffalo}
\email{xzhang99@buffalo.edu}
\date{}
\begin{document}

\begin{abstract}
We introduce a notion of sectional regularity for a homogeneous ideal $I$, which measures the regularity of its general sections with respect to linear spaces of various dimensions. It is related to axial constants defined as the intercepts on the coordinate axes of the set of  exponents of monomials in the reverse lexicographic generic initial ideal of $I$. We show the equivalence of these notions and several other homological and ideal-theoretic invariants. We also establish that these equivalent invariants grow linearly for the family of powers of a given ideal. 
\end{abstract}

\maketitle

%\setcounter{tocdepth}{1}
%\tableofcontents

%\blfootnote{This work was done as part of the Polymath Jr program partially supported by NSF award DMS--2113535. A.S. is partially supported by NSF award DMS--2101225.}

%\vspace{-2em}

\section{Introduction}

The generic initial ideal (gin) of a homogeneous ideal $I$ of a polynomial ring captures a number of important features of $I$. When computed with respect to the reverse lexicographic monomial order, $\ginr(I)$ retains two of the most significant measures for the homological complexity of $I$: the projective dimension  and Castelnuovo-Mumford  regularity, as shown by Bayer and Stillman in \cite{BS}. These groundbreaking results have subsequently been refined and generalized by Bayer--Charalambous--Popescu \cite{BCP} and Aramova--Herzog \cite{AH} to show that $\ginr(I)$ retains also the extremal Betti numbers of $I$; see \Cref{ex:introex} for an illustration of this notion.

In this paper we enlarge the scope of the algebraic invariants which  can be read off generic initial ideals by focusing on invariants termed {\em axial constants}, a terminology taken from \cite{Walker}. These constants are defined below and  have been studied implicitly by Mayes \cite{Mayes, Mayes2}, Dumnicki--Szemberg--Szpond--Tutaj-Gasi\'nska \cite{DST, DST2}, and Chauhan--Walker \cite{Walker}, from an asymptotic perspective. 

\begin{defn}
\label{def:axial}
For a homogeneous ideal $I$ of the polynomial ring $R=k[x_1, \ldots, x_d]$ and an integer $1\leq i\leq d$  define  the $i$-th {\em axial constant} of $I$ to be 
\[
a_i(I)=\min\{j \in \N \mid x_i^j \in \ginr(I)\}
\]  
If no power of $x_i$ is contained in $\ginr(I)$ then we set $a_i(I)=\infty$.
\end{defn}

Our main contribution is to point out a multitude of ways in which axial  constants can be interpreted as measures of the homological complexity of $I$.  For a glimpse of their significance, note that axial constants simultaneously generalize the notion of initial degree and that of Castelnuovo-Mumford regularity. Indeed, it has  been noted in \cite{Mayes, DST, Walker} that $a_1(I)$  is the least degree of a nonzero element of $I$.  Moreover, in characteristic zero, if the codimension of $I$ is $\codim(I)=c$ and $R/I$ is Cohen-Macaulay then one has $a_c(I)=\reg(R/I)+1=\reg(I)$.

One aim of this paper is to vastly generalize these observations to give homological and ring-theoretic interpretations for the intermediate axial constants $a_i(I)$ with $1<i<\codim(I)$. In the process we introduce the notion of {\em sectional regularity} for homogeneous ideals.

\begin{defn}%[\Cref{def:sreg}]
\label{def:sreg}
Let $I$ be a homogeneous ideal of a polynomial ring $R=k[x_1,\ldots, x_d]$ and let $i$ be an integer $0\leq i\leq d$. The $i$-th {\em sectional regularity} number of the ideal $I$ is defined for sufficiently general linear forms $\ell_{i+1},\ldots,\ell_d$ as
\[
\sreg_i(I)=\reg(I + (\ell_{i+1},\ldots,\ell_d)).
\]
\end{defn}

One of our main results, \Cref{thm:equiv}, shows that there is an equality $\sreg_i(I)=a_i(I)$ for $1\leq i\leq \codim(I)$. However sectional regularity distinguishes itself as a better behaved invariant for the range $\codim(I)<i\leq d$, where it remains finite, in contrast to the axial numbers. A multitude of additional invariants are shown to be equivalent to the two discussed above in \Cref{thm:equiv}.

We provide a formula for sectional regularity in terms of generic annihilator numbers $\alpha_{ij}(R/I)$. This notion, introduced in \cite{AH}, is defined as follows: for a module $M$ set $M^{(i)}=M/ (\ell_{1},\ldots,\ell_{i-1})$, where $\ell_1, \ldots, \ell_d$ are linear forms having the property 
\[
\alpha_{ij}(M)=\dim_k(0:_{M^{(i)}} \ell_i)_j<\infty \text{ for } 0\leq i\leq d-1, j\in \N.
\]
Such linear forms exist by the theory of almost regular sequences introduced in \cite{STT} and developed in \cite{AH}, among others. We rely heavily on this useful tool following in the footsteps of similar applications in \cite{T2, HT}. We establish in \Cref{cor:sregmax} the identity
\[
\sreg_i(I)=\max\{j \mid \alpha_{tj}(R/I)\neq 0, \ t \geq d-i\}+1.
\]
This allows us to formulate a theory of {\em coextremal annihilator numbers} which complements the more familiar notion of extremal annihilator numbers from \cite{AH}. In detail, we call $\alpha_{ij}(R/I)$ a coextremal annihilator number if it satisfies $\alpha_{ij}(R/I)\neq 0$ and 
\[
\alpha_{st}(R/I)=0 \text{ for all pairs }  (s,t)\neq (i,j) \text{ with } s \geq i,t \geq j.
\]
An coextremal annihilator number sits at a south-east corner of the region formed by the nonzero entries in the table of annihilator numbers. For comparison, an extremal annihilator number satisfies $\alpha_{ij}(R/I)\neq 0$ and $\alpha_{st}(R/I)=0$  for $ (s,t) \neq (i,j), s \leq i, t \geq j$, thus marking a south-west corner. We illustrate these notions in connection to axial constants in the following example.

\begin{ex}
\label{ex:introex}
Let $I=(\MyUnderLineWithNoLinebreaks{$x_1^2$}{black}{black}{3pt}{2pt}, x_1x_2, x_1x_3, x_1x_4, x_1x_5,x_1x_6, \MyUnderLineWithNoLinebreaks{$x_2^3$}{black}{black}{3pt}{2pt}, x_2^2x_3, x_2x_3^3,x_2x_3^2x_5,\MyUnderLineWithNoLinebreaks{$x_3^5$}{black}{black}{3pt}{2pt})$. This is an ideal of the polynomial ring $R=\Q[x_1, x_2, x_3, x_4, x_5]$ satisfying $I=\ginr(I)$. 

The tables below present the Betti numbers and the generic annihilator numbers of $R/I$. The extremal Betti numbers are boxed in the first table. The corresponding extremal generic annihilator numbers are boxed in the second table. This correspondence, established in \cite{BCP, AH}, is given by  
$\beta_{i,i+j}(R/I)=\alpha_{d-i,j}(R/I)$ for $d=\dim(R)$ and for each extremal annihilator number $\alpha_{i,j}(R/I)$.

In this paper we focus on the dual notion of coextremal annihilator numbers, which are underlined in  the second table. In \Cref{cor:axialvalues} we show that their position corresponds to a distinguished subset of the minimal  generators for the monomial ideal $I$, namely those minimal generators which are pure powers: $x_1^2, x_2^3, x_3^5$ and thus in turn to the values of the axial constants $a_1(I)=2, a_2(I)=3, a_3(I)=5$. The contributions of these generators to the Betti table of $R/I$, specifically to the homological degree one column, is also highlighted by underlining.
 
\[
\begin{matrix}
     \beta_{i,i+j}(R/I) &0&1&2&3&4&5&6\\
     % \text{total:}&1&11&25&26&16&6&1\\
      \text{0:}&1&\text{.}&\text{.}&\text{.}&\text{.}&\text{.}&\text{.}\\
      \text{1:}&\text{.}&\MyUnderLineWithNoLinebreaks{6}{black}{black}{1pt}{2pt}&15&20&15&6&\fbox{1}\\
      \text{2:}&\text{.}& 2&3&1&\text{.}&\text{.}&\text{.}\\
      \text{3:}&\text{.}&\MyUnderLineWithNoLinebreaks{2}{black}{black}{1pt}{2pt}&5&4& \fbox{1}&\text{.}&\text{.}\\
      \text{4:}&\text{.}&\MyUnderLineWithNoLinebreaks{1}{black}{black}{1pt}{2pt}&2& \fbox{1}&\text{.}&\text{.}&\text{.}
      \end{matrix}
     \qquad      \qquad
  \begin{matrix}
     \alpha_{i,j}(R/I) &0&1&2&3&4&5\\
     % \text{total:}&1&11&25&26&16&6&1\\
      \text{0:}&\text{.}&\text{.}&\text{.}&\text{.}&\text{.}&\text{.}\\
      \text{1:}& \fbox{1}&1&1&1&1&\MyUnderLineWithNoLinebreaks{1}{black}{black}{1pt}{2pt} \\
      \text{2:}&\text{.}& \text{.} & \text{.} &1&\MyUnderLineWithNoLinebreaks{1}{black}{black}{1pt}{2pt}& \text{.}\\
     \text{3:}&\text{.}& \text{.} &\fbox{1}& 1& \text{.} & \text{.}\\
       \text{4:}&\text{.}& \text{.} &  \text{.} &\fbox{\MyUnderLineWithNoLinebreaks{1}{black}{black}{1pt}{2pt}}& \text{.} & \text{.}
      \end{matrix}     
      \]
      
 Note that the same annihilator number can simultaneously be extremal and coextremal; this is true of $\alpha_{3,4}(R/I)$ in the above example.
\end{ex}

Beyond the setting of monomial ideals \Cref{cor:axialvalues} extends the correspondence illustrated in \Cref{ex:introex}  to a correspondence between coextremal annihilator numbers and axial constants of arbitrary homogeneous ideals given by
\[
\alpha_{d-i,j}(R/I) \text{ is  coextremal}  \iff a_{i-1}(I)<a_i(I)=j+1.
\]
This correspondence does not pin down the values of the coextremal annihilator numbers. It would be an interesting pursuit to understand what these values measure.

We now preview the organization of our paper and its main results. After providing necessary background on generic initial ideals in section \ref{s:background}, we introduce the technical underpinnings of sectional regularity in section \ref{appendix} and we express this invariant in terms of coextremal annihilator numbers in \Cref{cor:coextremal}.  Section \ref{s:2} is dedicated to a plethora of invariants, including the axial constants, which are equivalent to the sectional regularity. Our main result stating this equivalence is \Cref{thm:equiv}. In section \ref{s:linear} we study the growth of the sectional regularity and axial constants for powers of ideals. As it was shown by Kodiyalam \cite{Kod} and Cutkosky--Herzog--Trung \cite{CHT} that the Castelnuovo-Mumford regularity is given eventually by a linear function $\reg(I^n)=dn+e$ for $n\gg 0$ for the powers of a homogeneous ideal $I$, it is natural to ask whether the same is true for the sectional regularity and axial constants. We show in \Cref{thm:linear} that the similar result does indeed hold. %Furthermore, we identify the leading terms of these linear functions in the case when $I$ is a complete intersection, giving a new proof for the main result of  \cite{Mayes}.

%Our work focuses primarily on ideals and cyclic modules. However, the notions of sectional regularity and axial constants can be defined in an analogous manner for arbitrary $\N$-graded modules. We leave the details of this generalization to the interested reader.

\section{Background}
\label{s:background}

Let $I$ be a homogeneous ideal of a polynomial ring $R=k[x_1, \ldots, x_d]$. Throughout this paper we assume that $k$ is an infinite field, which is a necessary condition for generic initial ideals to be well-defined. The remarkable notion of generic initial ideals was introduced by Galligo in characteristic zero \cite{Galligo} and Bayer and Stillman in arbitrary characteristic \cite{BS}. 

Fix a monomial order on $R$ so that $x_1 > x_2 > \cdots > x_d$. A monomial order that will be used predominantly in this paper is the graded reverse lexicographic order, for which $x_1^{a_1}\cdots x_d^{a_d}>x_1^{b_1}\cdots x_d^{b_d}$ if and only if $\sum_{i=1}^d a_i>\sum_{i=1}^d b_i$ or $\sum_{i=1}^d a_i=\sum_{i=1}^d b_i$  and for the largest index $i$ so that $a_i\neq b_i$ we have $a_i<b_i$. 
The largest monomial with respect to a fixed order among those whose coefficients are nonzero in  a polynomial $f$ is called the {\em leading term} of $f$ and denoted $in(f)$.  The initial ideal of $I$ with respect to this order is 
$
\init(I)=\left( \init(f) \mid f\in I \right).
$

The natural action of the general linear group $\GL_d(k)$ on $R_1\cong k^d$ extends to an action  on $R$ via $g\cdot f(x_1, \ldots, x_d)= f(\sum_{j=1}^d g_{1j} x_j, \ldots, \sum_{j=1}^d g_{dj} x_j)$ if $g=(g_{ij})$. Thus each element of $g\in \GL_d(k)$ yields an automorphism of $R$, which we call a  linear change of coordinates. We denote the image of $I$ under the corresponding automorphims by $g\cdot I$. The generic initial ideal of $I$ is obtained by first applying to $I$ a sufficiently general linear change of coordinates followed by taking the initial ideal. %To make the terminology ``sufficiently general" precise we view $\GL_d(k)$ as a subset of the affine space $\A^{d^2}_k$ and dub a linear change of coordinates sufficiently general if it is determined by an element $g$ of the Zariski open set $U_{gin}(I)$ in \Cref{def:gin}. 
The details of the construction of the generic initial ideal are as follows.

\begin{defn}[{\cite{BS, Galligo}}]
\label{def:gin}
Fix a homogeneous ideal $I$ and a monomial order on $R$. There exists a nonempty Zariski open set of $U_{gin}\subseteq \GL_d(k)$   so that $\init(g\cdot I)$ does not depend on $g$ for $g\in U_{gin}(I)$. The ideal $\init(g\cdot I)$ for $g\in U_{gin}(I)$ is termed the {\em generic initial ideal} of $I$ and denoted  $\gin(I)$.
\end{defn}

Whenever we refer to the generic initial ideal of a homogeneous ideal $I$ with respect to the graded reverse lexicographic order, we use the notation $\ginr(I)$ instead of $\gin(I)$.

A monomial ideal $J$ is termed {\em strongly stable} provided that whenever $i < j$ and $\mu$ is a monomial such that $\mu x_j \in J$, we have $\mu x_i \in J$ as well. A monomial ideal is called {\em Borel-fixed} if it is fixed by the action of the Borel subgroup of upper triangular matrices of $GL_d(K)$. If $\ch(k)=0$ the notions of Borel fixed and strongly stable are equivalent. In positive characteristic, strongly stable implies Borel fixed but not conversely. Borel fixed ideals which are not strongly stable appear in \Cref{ex:charp} and \Cref{ex:charp2}.

\begin{defn}
Recall that the Hilbert function of an $\N$-graded module $M$ over a graded $k$-algebra $R$ is the function $H(M):\N\to\N$ given by $H(M)(i)= \dim_k M_i$. 
The {\em Castelnuovo-Mumford  regularity} of $M$ is 
$\reg(M)=\max\{j-i \mid {\rm Tor}^R_i(M,k)_j\neq 0\}$.
 When $M$ has finite length this invariant can also be expressed as $\reg(M)=\max\{i \mid H(M)(i)\neq 0\}$.
\end{defn}

It is useful to recall several properties of generic initial ideals.

\begin{rem}[Properties of gins]
\label{prop:properties}

Let $I$ be a homogeneous ideal of $R=k[x_1,\dots, x_d]$.
\begin{enumerate}
\item \cite[Theorem 15.26]{Eisenbud} The Hilbert functions of $I$ and $\gin(I)$ agree: $H(I)=H(\gin(I))$.
\item \cite{BSreg} The regularity of $I$ and $\ginr(I)$ agree: $\reg(I)=\reg(\ginr(I))$
\item \cite[Corollary 15.25]{Eisenbud}
If $P \in \Ass(R/\ginr(I))$, then $P = (x_1,\ldots ,x_j)$ and  $\codim(I)\leq  j\leq d-\operatorname{depth}(I)$. % \cite[Proposition 4.2.9]{HH}
\item \cite{Galligo, BS} $\gin(I)$ is Borel fixed
\item \cite{Galligo} \cite[Proposition 4.2.4]{HH} If $\ch(k) = 0$ or $\ch(k)$ is larger than any exponent appearing in the monomial generators of $\gin(I)$, then $\gin(I)$ is strongly stable.
%\item \cite{EK} if $\ginr(I)$ is strongly stable, then $\reg(\ginr(I))$ is equal to the largest degree of a minimal generator of $\ginr(I)$.
\end{enumerate}
\end{rem}

\section{Sectional regularity}
\label{appendix}

In this section we develop the notion of sectional regularity introduced in \Cref{def:sreg}. The name is meant to suggest that this invariant captures the regularity of a general linear subspace section of a given quotient ring or module. It bears some similarities to the notion of partial regularity introduced by Trung in \cite{T2}, which in turn is equivalent to the $\ell$-regularity from the work  of Bayer--Charalambous--Popescu \cite{BCP}.  However we emphasize that these invariants are not the same as the sectional regularity introduced here. Sectional regularity has been previously considered from a geometric perspective in \cite{BLPS} and in subsequent work deriving from this source.

%\begin{defn}
%\label{def:sreg}
%Let $I$ be a homogeneous ideal of a polynomial ring $R=k[x_1,\ldots, x_d]$ and let $i$ be an integer $0\leq i\leq d$. Then there is a nonempty Zariski open set $U_{sreg}(I)\subset \A^{d^2}_k$ so that for each $\gamma=(g_{ij})\in U_{sreg}(I)$ setting $\ell_i=\sum_{j=1}^d g_{ij}x_j$ yields that $\reg(I + (\ell_{i+1},\ldots,\ell_d))$ is independent on the choice of $\gamma\in U_{sreg}(I)$. We define the $i$-th {\em sectional regularity} number of the ideal $I$ for any linear forms $\ell_i$ as above to be given by
%\[
%\sreg_i(I)=\reg(I + (\ell_{i+1},\ldots,\ell_d)).
%\]
%\end{defn}

A main goal of this section is to establish well-definedness of sectional regularity for a homogeneous ideal $I$ as introduced in \Cref{def:sreg}. This entails demonstrating the existence of  a nonempty Zariski open subset $U_{sreg}(I)\subset GL_d(k)$ so that for each $\gamma=(g_{ij})\in U_{sreg}(I)$, setting $\ell_i=\sum_{j=1}^d g_{ij}x_j$ yields the same value for the integer $\sreg_i(I)=\reg(I + (\ell_{i+1},\ldots,\ell_d))$. This will be shown in  \Cref{prop:regIvsgin(I)}. 

A useful tool in our work is the notion of almost regular sequence. This was introduced under the name of filter regular sequence in \cite{STT} and further developed in \cite{AH}. Here we follow the presentation in \cite[\S4.3.1]{HH}.

\begin{defn}
\label{def:almostreg}
Let $M$ be a finitely generated module over the polynomial ring $R=k[x_1,\dots, x_d]$. An element $\ell\in R$ 
is an {\em almost regular element }on  $M$ if the module $(0:_M \ell)=\{m\in M\mid \ell m=0\}$  is a finite dimensional $k$-vector space. 

 A sequence of elements $\ell_1, \ldots, \ell_s$  in $R$ is an {\em almost regular sequence} on $M$ if for each $i$ the element $\ell_{i+1}$ is almost regular on $M/\left(\ell_1, \ldots, \ell_i \right)$. 
\end{defn}
%The module $(0:_M \ell)$ in \Cref{def:almostreg} is the kernel of the multiplication homomorphism $\mu_\ell:M\to M, \mu_\ell(m)=\ell \cdot m$.
We will be interested in almost regular sequences consisting of linear forms. In the case of cyclic quotients by generic initial ideals, a canonical example of an almost regular sequence is given by the sequence of variables.

\begin{ex}[{\cite[Proposition 4.3.3]{HH}}]
Let $I$ be a homogeneous ideal over $R=k[x_1,\dots, x_d]$. Then $x_d,\ldots, x_1$ is almost regular on $R/\ginr(I)$.
\end{ex}

Since the above example is fundamental to this work, in order to keep the notation closely aligned with the example, we shall use regular sequences indexed descendingly, that is, we shall work with  elements $\ell_d, \ldots, \ell_1$ which form  an {\em almost regular sequence} on a module $M$. By \Cref{def:almostreg} this means that $\ell_{i-1}$ is almost regular on $M/\left(\ell_d, \ldots, \ell_i)\right)$ for each $2\leq i\leq d$. Whenever we cite results from the literature such as \Cref{lem:almostregopen} below we adapt the indexing to match this convention. 

A crucial fact about almost regular sequence is that, given a module $M$, a sufficiently general $k$-basis of $R_1$ yields an almost regular sequence of linear forms on $M$.

\begin{lem}[{\cite[Theorem 4.3.6]{HH}}]
\label{lem:almostregopen}
Let $M$ be a finitely generated graded module over $R=k[x_1,\dots, x_d]$. 
To each $\gamma = (g_{ij}) \in \GL_d(k)$ we associate the sequence $\ell_i=\sum_{j=1}^d g_{ij}x_j$ with $1\leq i\leq d$. Then there exists a nonempty Zariski open subset $U_{ar}(M) \subset \GL_d(k)$ such that $\ell_d, \ldots, \ell_1$ is almost regular on $M$ for all $\gamma\in U_{ar}$.

In particular, there exists a $k$-basis of $R_1$ which is an almost regular sequence on M.
\end{lem}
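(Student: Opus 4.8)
The plan is to translate ``almost regular'' into a condition about associated primes, verify it for a single linear form with generic coefficients, and then spread that condition out to a Zariski-open subset of $\GL_d(k)$.

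\emph{Step 1 (almost regularity via associated primes).} First I would record the standard reduction: for a finitely generated graded $R$-module $N$, a linear form $\ell\in R_1$ is almost regular on $N$ in the sense of \Cref{def:almostreg} if and only if $\ell\notin P$ for every $P\in\Ass(N)$ with $P\neq\fm:=(x_1,\dots,x_d)$. Indeed $(0:_N\ell)$ is a submodule, so $\Ass(0:_N\ell)\subseteq\Ass(N)$, and each of these primes contains $\operatorname{Ann}(0:_N\ell)\ni\ell$; hence avoiding the non-maximal associated primes of $N$ forces $\Ass(0:_N\ell)\subseteq\{\fm\}$, which for a finitely generated graded module means finite length. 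Conversely, if $\ell\in P=\operatorname{Ann}(y)$ for some nonzero homogeneous $y$ with $P\in\Ass(N)$ and $P\neq\fm$, then $Ry\cong R/P\subseteq(0:_N\ell)$ has positive dimension. Since $\Ass(N)$ is finite and each $P\neq\fm$ is homogeneous with $P\cap R_1$ a proper $k$-subspace of $R_1$, and since $k$ is infinite, the almost regular linear forms on $N$ form the nonempty complement of finitely many proper subspaces of $R_1$, hence a nonempty Zariski-open subset.

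\emph{Step 2 (the generic sequence).} The conditions defining an almost regular \emph{sequence} are coupled, since $\Ass\bigl(M/(\ell_d,\dots,\ell_{i+1})\bigr)$ depends on the earlier forms, so I would first establish the statement at the generic point. Let $g_{ij}$ be indeterminates, $L_i=\sum_j g_{ij}x_j$, and prove by descending induction on $i=d,d-1,\dots,1$ that $L_i$ is almost regular on $M^{(i)}:=M/(L_d,\dots,L_{i+1})$ (so $M^{(d)}=M$), viewed over the subfield $F_i=k\bigl(g_{tj}:t>i\bigr)$ over which it is defined, with $F_d=k$. For the inductive step pass to the purely transcendental extension $F_{i-1}=F_i(g_{ij}:j)$. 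Because this extension is purely transcendental, $R_{F_i}/P$ remains a domain after tensoring with $F_{i-1}$, so $PF_{i-1}[x]$ is prime and the non-maximal associated primes of $M^{(i)}\otimes_{F_i}F_{i-1}$ are exactly the extensions of those of $M^{(i)}$; each meets $(R_{F_{i-1}})_1$ in a subspace defined over $F_i$, so the linear form $L_i$, whose coefficients are transcendental over $F_i$, lies in none of them, and Step~1 applies over $F_{i-1}$. Passing to $M^{(i-1)}=M^{(i)}\otimes_{F_i}F_{i-1}/L_iM^{(i)}$ continues the induction, and chaining the resulting statements shows $L_d,\dots,L_1$ is almost regular on $M\otimes_kK$, $K=k(g_{ij})$.

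\emph{Step 3 (spreading out).} With $A=k[g_{ij}]$ and $\operatorname{Frac}(A)=K$, generic flatness lets me shrink $\operatorname{Spec}A$ to a nonempty open $V$ over which every module occurring in the construction — the iterated quotients $M_A/(L_d,\dots,L_{i+1})$, the annihilator submodules $\bigl(0:_{M_A/(L_d,\dots,L_{i+1})}L_i\bigr)$, and their quotients — is flat. For a flat family these constructions commute with passage to fibers, and the fiberwise Hilbert polynomial is constant along the connected base $V$; since by Step~2 each such annihilator has Hilbert polynomial $0$ at the generic point, it has finite length at every $k$-point of $V$, which says exactly that each $\ell_i$ is almost regular on the corresponding specialized quotient there. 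Taking $U_{ar}(M)=V\cap\{\det(g_{ij})\neq0\}$, which is nonempty because $k$ is infinite, gives the desired open set; and for $\gamma\in U_{ar}(M)$ the forms $\ell_1,\dots,\ell_d$ are a $k$-basis of $R_1$ since $\gamma$ is invertible, yielding the ``in particular.'' The main obstacle is precisely making ``almost regular'' manifestly open in families: forming $(0:\ \cdot\ )$ and forming fibers do not commute in general, so the defining conditions are not open or closed by inspection, and the generic-flatness reduction in Step~3 (equivalently, upper semicontinuity of fiber dimension) is what circumvents this, with Step~2 ensuring the resulting open set is nonempty.
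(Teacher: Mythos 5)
The paper itself offers no proof of this lemma; it is cited directly from \cite[Theorem 4.3.6]{HH}. Your proposal supplies a genuine, self-contained argument, and it is essentially correct. Step~1 (almost regularity of a linear form $\ell$ on $N$ is equivalent to $\ell$ avoiding every non-maximal associated prime of $N$) is the standard reduction and is proved carefully. Step~2 correctly handles the coupling between successive conditions by working over the generic point: the key facts you use --- that flat base change along a purely transcendental field extension preserves primeness of extended ideals and sends $\Ass(M^{(i)})$ bijectively onto $\Ass(M^{(i)}\otimes F_{i-1})$, and that $L_i$ with transcendental coefficients over $F_i$ misses every proper subspace of $(R_{F_{i-1}})_1$ defined over $F_i$ --- are all right. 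Step~3 is the genuinely delicate point, and you identify the obstruction accurately: $(0:\,\cdot\,)$ does not commute with fibers in general. Your remedy via generic flatness works; one should say explicitly that the correct invariant being compared is the \emph{Hilbert function} rather than the Hilbert polynomial (both vanish for finite-length modules, but it is constancy of the Hilbert function across the connected flat family that directly yields finite length of the specialized annihilators), and that the quotient formations $M_A/(L_d,\dots,L_{i+1})$ always commute with specialization by right-exactness of $\otimes$, while the annihilators commute once $L_iN$ (equivalently $N/(0:_N L_i)$) is made flat over $A$. These are refinements of what you wrote, not corrections.

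One comparison worth recording: the argument in \cite{HH} is organized somewhat differently, but the crux in any proof is the same two-step structure you use --- a per-element associated-primes criterion plus a semicontinuity/generic-flatness device to turn "the good set at each stage is open and dense, but varies with the earlier choices" into "there is a single nonempty open in $\GL_d(k)$ working uniformly." Your explicit passage through the generic point $K=k(g_{ij})$ and spreading out over $\operatorname{Spec} k[g_{ij}]$ is a clean and more geometric way to organize that device, and it has the added advantage of simultaneously establishing the generic-linear-forms variant used later in the paper (compare \Cref{not:generic} and \Cref{prop:regIvsgin(I)generic}).
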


One can use almost regular sequences of linear forms to compute regularity.

\begin{lem}[{\cite[Proposition 20.20]{Eisenbud}}]
\label{lem:almostreg}
Let $M$ be a finitely generated graded module over the ring $R=k[x_1,\dots, x_d]$.
If $\ell\in R_1$ is almost regular on $M$ then 
\[ \reg(M) =\max\{\reg\left (M/(\ell)\right), \reg (0:_{M} \ell) \}.\]
\end{lem}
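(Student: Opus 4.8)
The statement to prove is \Cref{lem:almostreg}: if $\ell\in R_1$ is almost regular on a finitely generated graded module $M$, then $\reg(M)=\max\{\reg(M/\ell M),\ \reg(0:_M\ell)\}$. Since the paper explicitly flags this as \cite[Proposition 20.20]{Eisenbud}, the most natural approach is to derive it from the long exact sequence in local cohomology (or in $\operatorname{Tor}$) attached to multiplication by $\ell$, exactly as one does for a genuine nonzerodivisor, and then to observe that the correction term contributed by the failure of $\ell$ to be a nonzerodivisor is harmless because it is governed by the finite-length module $0:_M\ell$.

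\textbf{Key steps.} First I would set $N=0:_M\ell$ and $\overline{M}=M/\ell M$, and record the two short exact sequences obtained by factoring multiplication by $\ell$ through its image $\ell M$:
\[
0\longrightarrow N\longrightarrow M\xrightarrow{\ \ell\ } M\longrightarrow \overline M\longrightarrow 0,
\]
which splits into $0\to N\to M\to \ell M(1)\to 0$ (up to the degree shift from $\ell$ having degree $1$) and $0\to \ell M(1)\to M(1)\to \overline M(1)\to 0$. Second, I would translate regularity into the vanishing of graded local cohomology with respect to the maximal ideal, using $\reg(M)=\max_i\{\,\operatorname{end}(H^i_{\fm}(M))+i\,\}$; since $N$ has finite length, $H^0_{\fm}(N)=N$ and $H^i_{\fm}(N)=0$ for $i>0$, so $\reg(N)=\operatorname{end}(N)$, and the local cohomology long exact sequences of the two short exact sequences then let me chase the endpoints. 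Third, from the long exact sequence of $0\to \ell M(1)\to M(1)\to \overline M(1)\to 0$ one gets $\reg(\overline M)\le \max\{\reg(\ell M)-1,\ \reg(M)\}+1$ type bounds, and from $0\to N\to M\to \ell M(1)\to 0$ one relates $\reg(\ell M)$ to $\reg(M)$ and $\reg(N)$; assembling these inequalities in both directions yields $\reg(M)\le\max\{\reg(\overline M),\reg(N)\}$ and the reverse inequality. The reverse inequality is the easier half: $\reg(N)\le\reg(M)$ follows because $N\hookrightarrow M$ is the inclusion of a submodule with finite-length cokernel behaviour controlled degreewise, and $\reg(\overline M)\le\reg(M)$ follows from the surjection $M\to\overline M$ together with the fact that the kernel $\ell M(1)$ has regularity $\le\reg(M)$.

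\textbf{Main obstacle.} The delicate point is bookkeeping the degree shift by $1$ coming from $\deg\ell=1$ and making sure it is absorbed correctly so that no spurious $+1$ or $-1$ survives in the final equality; a genuine nonzerodivisor gives $\reg(M)=\reg(M/\ell M)$ with no shift, and the whole content here is that the presence of $N=0:_M\ell$ can only push things up to $\reg(N)$ and never introduces an off-by-one error. Concretely, the subtlety is that in the long exact sequence $\cdots\to H^i_{\fm}(M)_j\to H^i_{\fm}(\overline M)_j\to H^{i+1}_{\fm}(\ell M)_{j}\to\cdots$ the module $\ell M$ is only a twist of a quotient of $M$, so I must separately bound $\reg(\ell M)$ in terms of $\reg(M)$ using the sequence $0\to N\to M\to \ell M(1)\to 0$ and the finite-length hypothesis on $N$; once $\reg(\ell M)\le\max\{\reg(M),\reg(N)+1\}$ is in hand, the rest is routine diagram chasing. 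Alternatively, one can run the entire argument with $\operatorname{Tor}^R_i(-,k)$ in place of local cohomology, using that $\reg(N)=\operatorname{end}(N)$ for a finite-length module; I would pick whichever of the two gives the cleanest handling of the twist.
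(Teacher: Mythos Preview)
The paper does not supply its own proof of this lemma: it is stated with the citation \cite[Proposition~20.20]{Eisenbud} and used as a black box. Your proposal is precisely the standard argument behind that reference---factor multiplication by $\ell$ through its image, run the long exact sequence in local cohomology (or in $\operatorname{Tor}$), and use that $N=(0:_M\ell)$ has finite length so only $H^0_\fm(N)=N$ survives---and your sketch is correct.

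Two small sharpenings are worth making if you write it out in full. First, your justification of $\reg(N)\le\reg(M)$ is a bit vague; the clean reason is that $N$ is a finite-length submodule of $M$, hence $N\subseteq H^0_\fm(M)$, so $\reg(N)=\operatorname{end}(N)\le\operatorname{end}(H^0_\fm(M))\le\reg(M)$. Second, for the inequality $\reg(M)\le\max\{\reg(\overline M),\reg(N)\}$, the cleanest way to finish the chase is to observe that for $i\ge1$ the map $H^i_\fm(M)_{j}\to H^i_\fm(M)_{j+1}$ induced by $\ell$ is \emph{injective} once $j>\max\{\reg(\overline M),\reg(N)\}-i$ (this uses $H^{i-1}_\fm(\overline M)_{j+1}=0$ together with $H^i_\fm(M/N)\cong H^i_\fm(M)$), and then vanishing in high degree forces $H^i_\fm(M)_j=0$; the $i=0$ case follows from $N_j=0$ and $H^0_\fm(\overline M)_j=0$ in the relevant range. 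This is exactly the ``routine diagram chasing'' you allude to, but pinning down the direction of the injectivity is where the off-by-one risk you flagged actually lives.
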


Together these facts allow us to relate the regularity of an ideal and that of its generic initial ideal modulo appropriate almost regular sequences. 

\begin{prop}
\label{prop:regIvsgin(I)}
Let $I$ be a homogeneous ideal of a polynomial ring $R=k[x_1,\dots, x_d]$. Then there exists a nonempty Zariski open subset $U_{sreg}(I)\subset GL_d(k)$ so that for each $\gamma=(g_{ij})$ in $U_{sreg}(I)$ setting $\ell_i=\sum_{j=1}^d g_{ij}x_j$ yields that $\ell_d,\ldots, \ell_1$ is an almost regular sequence on $R/I$. Moreover, for all $0\leq i\leq d-1$, one has
\[
H \left(I+(\ell_{i+1}, \ldots, \ell_d)\right) =H\left( \ginr(I)+(x_{i+1}, \ldots, x_d)\right).
\]
\[
\reg\left(I+(\ell_{i+1}, \ldots, \ell_d)\right) =\reg\left( \ginr(I)+(x_{i+1}, \ldots, x_d)\right).
\]
\end{prop}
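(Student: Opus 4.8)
The plan is to build the open set $U_{sreg}(I)$ as an intersection of two kinds of generic conditions: those guaranteeing that the linear forms $\ell_d, \ldots, \ell_1$ cut out the generic initial ideal in a controlled way, and those guaranteeing almost regularity. First I would invoke \Cref{lem:almostregopen} applied to $M = R/I$ to obtain $U_{ar}(R/I)$, and also invoke \Cref{def:gin} to obtain $U_{gin}(I)$ for the reverse lexicographic order; on the intersection $U = U_{gin}(I) \cap U_{ar}(R/I)$, for a general $\gamma$ the sequence $\ell_d, \ldots, \ell_1$ is almost regular on $R/I$ and $\init(\gamma \cdot I) = \ginr(I)$. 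The substance of the argument is the Hilbert-function identity
\[
H\left(I + (\ell_{i+1}, \ldots, \ell_d)\right) = H\left(\ginr(I) + (x_{i+1}, \ldots, x_d)\right),
\]
and the key observation making this work is a compatibility between the reverse lexicographic order and quotienting by the last variables: for any ideal $J$ of $R$ one has $\init\left(J + (x_{i+1}, \ldots, x_d)\right) = \init(J) + (x_{i+1}, \ldots, x_d)$ when the order is reverse lex, because $x_{i+1}, \ldots, x_d$ are the smallest variables. Thus working modulo $(x_{i+1},\ldots,x_d)$ commutes with taking initial ideals in reverse lex.

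The main steps, in order, would be: (1) after applying the generic change of coordinates $\gamma$, replace $I$ by $\gamma \cdot I$ and $\ell_j$ by $x_j$ for $j > i$ — more precisely, a generic $\gamma$ can be chosen so that $\ell_{i+1}, \ldots, \ell_d$ become a new coordinate system, and one reduces to showing $H(I + (x_{i+1},\ldots,x_d)) = H(\ginr(I) + (x_{i+1},\ldots,x_d))$ for $I$ in sufficiently general position; (2) use that taking initial ideals preserves Hilbert functions, so $H(I + (x_{i+1},\ldots,x_d)) = H(\init(I + (x_{i+1},\ldots,x_d)))$; (3) apply the reverse-lex compatibility above to get $\init(I + (x_{i+1},\ldots,x_d)) = \init(I) + (x_{i+1},\ldots,x_d)$, and since $\init(I) = \ginr(I)$ after the generic change of coordinates, conclude the Hilbert-function equality. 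The care needed here is that the change of coordinates that produces $\ginr(I)$ and the one that makes $\ell_{i+1},\ldots,\ell_d$ into $x_{i+1},\ldots,x_d$ must be reconciled; the clean way is to fix the generic $\gamma$ producing $\ginr$, observe $\gamma \cdot I$ has initial ideal $\ginr(I)$, and then note that the almost regular linear forms arising from a generic $\gamma$ on $R/I$ correspond, after this change of coordinates, to a generic choice relative to $R/\ginr(I)$, for which the variables themselves already work (the cited example that $x_d, \ldots, x_1$ is almost regular on $R/\ginr(I)$).

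For the regularity statement, once the Hilbert-function equality is in hand I would argue by descending induction on $i$ using \Cref{lem:almostreg}. At the top, $\reg(I) = \reg(\ginr(I))$ by \Cref{prop:properties}(2). Inductively, writing $M_i = R/(I + (\ell_{i+1},\ldots,\ell_d))$ and $N_i = R/(\ginr(I) + (x_{i+1},\ldots,x_d))$, the form $\ell_i$ (resp. $x_i$) is almost regular on $M_i$ (resp. $N_i$) — this follows because almost regularity of $\ell_d, \ldots, \ell_1$ on $R/I$ passes to the successive quotients by definition, and likewise for the variables on $R/\ginr(I)$. Then \Cref{lem:almostreg} gives
\[
\reg(M_i) = \max\left\{ \reg(M_{i-1}),\ \reg\left(0 :_{M_i} \ell_i\right) \right\},
\]
with the analogous identity for $N_i$. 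The modules $0 :_{M_i} \ell_i$ and $0 :_{N_i} x_i$ have finite length, so their regularity is the top nonzero degree of their Hilbert function; and these Hilbert functions agree because they can be expressed through the Hilbert functions of $M_i, M_{i-1}$ (resp. $N_i, N_{i-1}$) via the exact sequence $0 \to (0 :_{M_i} \ell_i) \to M_i \xrightarrow{\ell_i} M_i \to M_{i-1} \to 0$, and the latter Hilbert functions match by the first part of the proposition. Combining with the inductive hypothesis $\reg(M_{i-1}) = \reg(N_{i-1})$ yields $\reg(M_i) = \reg(N_i)$, which is exactly the claim.

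The main obstacle I anticipate is bookkeeping the interaction of the two genericity requirements — namely ensuring that a single generic $\gamma$ simultaneously computes $\ginr(I)$, produces an almost regular sequence of linear forms on $R/I$, and does so in a way compatible with the reverse-lex-vs-quotient identity for every $i$ from $0$ to $d-1$. This is where one must be precise that "sufficiently general" can be taken uniformly: the finitely many conditions (one open set per value of $i$, plus $U_{gin}(I)$ and $U_{ar}(R/I)$) intersect in a nonempty Zariski open set because $k$ is infinite, and this intersection is the desired $U_{sreg}(I)$. The reverse-lex compatibility lemma $\init(J + (x_{i+1},\ldots,x_d)) = \init(J) + (x_{i+1},\ldots,x_d)$ is the technical heart and should be stated and proved (or cited) carefully, since it is precisely the property that fails for other monomial orders and is the reason the result is special to $\ginr$.
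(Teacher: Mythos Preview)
Your approach is essentially the same as the paper's, and your identification of the key ingredients---the almost-regular open set, the gin open set, the reverse-lex compatibility $\init(J + (x_{i+1},\ldots,x_d)) = \init(J) + (x_{i+1},\ldots,x_d)$, and the exact-sequence comparison of the colon modules' Hilbert functions---is correct. Two bookkeeping points deserve tightening.

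First, the concern you flag about reconciling the two changes of coordinates is resolved by taking $U_{gin}^{-1}(I) = \{\gamma : \gamma^{-1} \in U_{gin}(I)\}$ rather than $U_{gin}(I)$. With $\ell_i = \sum_j g_{ij} x_j$, the automorphism sending $\ell_j \mapsto x_j$ is $\gamma^{-1}$, not $\gamma$; so $\gamma^{-1}(I+(\ell_{i+1},\ldots,\ell_d)) = \gamma^{-1}(I)+(x_{i+1},\ldots,x_d)$, and one needs $\init(\gamma^{-1}\cdot I) = \ginr(I)$, i.e.\ $\gamma^{-1}\in U_{gin}(I)$. The paper sets $U_{sreg}(I) = U_{gin}^{-1}(I)\cap U_{ar}(R/I)$, and this single open set works uniformly for all $i$; your worry about intersecting ``one open set per value of $i$'' is unnecessary.

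Second, your induction for the regularity equality is mislabeled. You call it descending with base case $\reg(I)=\reg(\ginr(I))$ at $i=d$, but the step you actually write uses the hypothesis $\reg(M_{i-1})=\reg(N_{i-1})$ to conclude $\reg(M_i)=\reg(N_i)$, which is ascending. The identity $\reg(M_i)=\max\{\reg(M_{i-1}),\reg(0:_{M_i}\ell_i)\}$ does not let you recover $\reg(M_{i-1})$ from $\reg(M_i)$, so a genuine descending induction would not go through. The paper runs the induction ascending from the trivial base case $i=0$, where $M^{(0)}=N^{(0)}=k$; with that correction your argument is exactly the paper's.
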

\begin{proof}
We set $U_{gin}^{-1}(I)=\{\gamma \in GL_d(k) \mid \gamma^{-1}\in U_{gin}(I)\}$, where $U_{gin}(I)$ is the Zariski open set in \Cref{def:gin}. Since $U_{gin}(I)$ is an open set, so is $U_{gin}^{-1}(I)$; see \cite[Lemma 4.3.8]{HH}. Set $U_{sreg}(I)=U_{gin}^{-1}(I)\cap U_{ar}(R/I)$, where $U_{ar}$ is the Zariski open set in \Cref{lem:almostregopen}. Since $\gamma\in U_{ar}(R/I)$, the sequence $\ell_d, \ldots, \ell_1$ is almost regular on $R/I$.

We proceed to establish the claimed identities. The  isomorphisms
\begin{eqnarray*}
I+(\ell_{i+1},\cdots,\ell_d) \cong \gamma^{-1}(I+(\ell_{i+1},\cdots,\ell_d))=\gamma^{-1}(I)+(x_{i+1},\cdots,x_d)
\end{eqnarray*}
give the first equality in the sequence
\begin{eqnarray*}
H(I+(\ell_{i+1},\cdots,\ell_d)) &=&H(\gamma^{-1}(I)+(x_{i+1},\cdots,x_d))\\
&=&H(\init(\gamma^{-1}(I)+(x_{i+1},\cdots,x_d))) \quad \text{ by } \Cref{prop:properties} \text{ (1)}\\
&=&H(\init(\gamma^{-1}(I))+(x_{i+1},\cdots,x_d))  \\%\quad \text{ by } \cite[Lemma 4.3.7]{HH}\\
&=&H(\ginr(I)+(x_{i+1},\cdots,x_d)) \qquad  \text{ since } \gamma^{-1}\in U_{gin}(I).
\end{eqnarray*}

As a byproduct of this equality we obtain another useful identity. Set 
\[M^{(i)}=R/(I+(\ell_{i+1},\cdots,\ell_d)), \quad N^{(i)}=R/(\ginr(I)+(x_{i+1},\cdots,x_d))\] and note that $H(M^{(i)})=H(N^{(i)})$ as shown above.
Then  the exact sequences 
\[
0\to (0:_{M^{(i)}} \ell_{i})(-1) \to M^{(i)}(-1) \xrightarrow{\cdot \ell_i} M^{(i)} \to M^{(i-1)}\to 0
\]
\[
0\to (0:_{N^{(i)}} x_{i})(-1) \to N^{(i)}(-1) \xrightarrow{\cdot x_i} N^{(i)} \to N^{(i-1)}\to 0
\]
and the equalities $H(M^{(i)})=H(N^{(i)})$ yield $H(0:_{M^{(i)}} \ell_{i})=H(0:_{N^{(i)}} x_{i})$ for $1\leq i\leq d$. Since the regularity of the finite length modules $(0:_{M^{(i)} } \ell_{i})$ and $(0:_{N^{(i)} } x_{i})$ is determined by their Hilbert function it follows that $\reg(0:_{M^{(i)}} \ell_{i})=\reg(0:_{N^{(i)}} x_{i})$.

We now establish the claim regarding regularity by induction on $i$. The case $i=0$ holds true because $M^{(0)}=N^{(0)}=k$. \Cref{lem:almostreg} (1) cast  in terms of our current notation, where $\ell_i$ is almost regular on $M^{(i)}$ and $M^{(i-1)}=M^{(i)}/(\ell_i)$, yields
\begin{equation}
\label{eq:1}
\reg(M^{(i)})=\max\left \{\reg(M^{(i-1)}), \reg(0:_{M^{(i)}}\ell_{i}) \right \}
\end{equation}
and similarly 
\[
\reg(N^{(i)})=\max\left \{\reg(N^{(i-1)}), \reg(0:_{N^{(i)}}x_i)\right \}.
\]
The inductive hypothesis guarantees $\reg(M^{(i-1)})=\reg(N^{(i-1)})$ and prior considerations yield  $\reg(0:_{M^{(i)}}\ell_{i})=\reg(0:_{N^{(i)}}x_{i})$, therefore the above identities imply $\reg(M^{(i)})=\reg(N^{(i)})$. From these equalities the last claim follows.
\end{proof}

We now present an alternate formula for the sectional regularity. This can be formulated in terms of generic annihilator numbers, which we now define.

\begin{defn}
\label{def:alphaij}
Let $I$ be a homogeneous ideal of a polynomial ring $R$ with $\dim(R)=d$ and let $\ell_d, \ldots, \ell_1$ be an almost regular sequence for $R/I$. Let $M^{(j)}=R/(I + (\ell_{j+1},\ldots,\ell_d))$   and define
\[
\alpha_{ij}(R/I)=\dim_k(0:_{M^{(d-i)}} \ell_{d-i})_j  \text{ for } 0\leq i\leq d-1
\]
to be the {\em generic annihilator numbers} of $R/I$. We do not specify values for annihilator numbers if $i=d$; although these appear in the literature (see \cite{AH}) they are not relevant here.
\end{defn}

\begin{cor}
\label{cor:sregmax}
Continuing with the notation in \Cref{def:alphaij}
 one has 
\[
\sreg_i(I)=\max\{\reg(0:_{M^{(t)}} \ell_t) \mid t \leq  i\}+1=\max\{j \mid \alpha_{sj}(R/I)\neq 0,s \geq d-i\}+1.
\]
Consequently the sequence of sectional regularity numbers is nondecreasing, that is, 
\[
 \sreg_1(I) \leq  \sreg_2(I) \leq  \ldots \leq \sreg_d(I).
\]
\end{cor}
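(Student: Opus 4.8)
The plan is to deduce both identities directly from \Cref{prop:regIvsgin(I)} together with \Cref{lem:almostreg}, applied iteratively along the almost regular sequence $\ell_d,\ldots,\ell_1$. First I would recall the notation $M^{(i)}=R/(I+(\ell_{i+1},\ldots,\ell_d))$ from the proof of \Cref{prop:regIvsgin(I)}, so that $M^{(i-1)}=M^{(i)}/(\ell_i)$ and, by the definition of sectional regularity, $\sreg_i(I)=\reg(I+(\ell_{i+1},\ldots,\ell_d))=\reg(M^{(i)})+1$ (the shift by $1$ coming from the short exact sequence $0\to I+(\ell_{i+1},\ldots,\ell_d)\to R\to M^{(i)}\to 0$, which for $i<d$ gives $\reg(I+(\ell_{i+1},\ldots,\ell_d))=\reg(M^{(i)})+1$ since $M^{(i)}\neq 0$). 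The case $i=d$ needs a separate remark: then $M^{(d)}=R/I$ and the same relation $\sreg_d(I)=\reg(R/I)+1=\reg(I)$ holds.

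Next I would unwind the recursion \eqref{eq:1} from the proof of the previous proposition:
\[
\reg(M^{(i)})=\max\{\reg(M^{(i-1)}),\ \reg(0:_{M^{(i)}}\ell_i)\}.
\]
Iterating this down to $M^{(0)}=k$, which has regularity $0$, yields
\[
\reg(M^{(i)})=\max\{\reg(0:_{M^{(t)}}\ell_t)\mid 1\leq t\leq i\}
\]
(with the convention that the maximum over the empty set, occurring when $i=0$, is $0$). Adding $1$ gives the first claimed formula $\sreg_i(I)=\max\{\reg(0:_{M^{(t)}}\ell_t)\mid t\leq i\}+1$. For the second formula I would observe that each module $0:_{M^{(t)}}\ell_t$ has finite length (since $\ell_d,\ldots,\ell_1$ is almost regular on $R/I$), so its regularity equals $\max\{j\mid (0:_{M^{(t)}}\ell_t)_j\neq 0\}=\max\{j\mid \alpha_{d-t,\,j}(R/I)\neq 0\}$ by \Cref{def:alphaij}. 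Reindexing with $s=d-t$ (so $t\leq i$ becomes $s\geq d-i$) converts this into $\max\{j\mid \alpha_{sj}(R/I)\neq 0,\ s\geq d-i\}$, giving the second equality.

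The monotonicity is then immediate: the set $\{\reg(0:_{M^{(t)}}\ell_t)\mid t\leq i\}$ grows (weakly) as $i$ increases, so its maximum is nondecreasing in $i$, hence $\sreg_1(I)\leq\sreg_2(I)\leq\cdots\leq\sreg_d(I)$. I expect the only genuine subtlety to be bookkeeping: making sure the degree shift between $\reg(I+(\ell_{i+1},\ldots,\ell_d))$ and $\reg(M^{(i)})$ is handled consistently (including the boundary cases $i=0$, where $M^{(0)}=k$ and the formula must still read $\sreg_0(I)=1$ if one extends the definition, and $i=d$), and confirming that \Cref{prop:regIvsgin(I)} legitimately lets us replace $\ell_i,M^{(i)}$ by $x_i,N^{(i)}$ throughout so that the $\alpha_{ij}$ defined via the $\ell$'s agree with those defined via the variables — but this is exactly the content already proved there, so no new argument is needed. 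There is no serious obstacle; the statement is essentially a repackaging of \Cref{prop:regIvsgin(I)} and \Cref{lem:almostreg}.
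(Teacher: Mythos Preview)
Your proposal is correct and follows essentially the same route as the paper: iterate equation~\eqref{eq:1} down to $M^{(0)}=k$, translate $\reg(M^{(i)})$ to $\sreg_i(I)$ via the standard shift $\reg(J)=\reg(R/J)+1$, and then reindex in terms of the generic annihilator numbers. The extra care you take with the boundary cases and the shift-by-one is more explicit than the paper's ``by definition,'' but no new idea is introduced.
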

\begin{proof}
Applying equation \eqref{eq:1} repeatedly yields
\[
\reg(M^{(i)})=\max \left\{\reg\left(M^{(0)}\right), \reg\left(0:_{M^{(t)}} \ell_t\right) \mid t \leq  i \right \}.
\]
Since $M^{(0)}=k$, and $\sreg_i(I)=\reg(M^{(i)})+1$ by definition, the first claimed equality follows.
The second is obtained by noting that
$\reg\left(0:_{M^{(d-t)}} \ell_{d-t}\right)=\max\{j\mid \alpha_{tj}(R/I)\neq 0\}$.
Either one of the descriptions for $\sreg_i(I)$ obtained above yields that this sequence is non-decreasing.
\end{proof}

We now introduce a new definition of coextremal annihilator numbers.

\begin{defn}
\label{def:coextremal}
Let $I$ be a homogeneous ideal of a polynomial ring $R$. A generic annihilator number $\alpha_{ij}$ as defined in \Cref{def:alphaij} is called a {\em coextremal generic annihilator number} if it satisfies $\alpha_{ij}(R/I)\neq 0$ and $\alpha_{st}(R/I)=0$  for all pairs  $(s,t)\neq (i,j)$ with  $s \geq i,t \geq j$.
\end{defn}

The following result specifies the meaning of the coextremal annihilator numbers. 

\begin{cor}
\label{cor:coextremal}
Let $I$ be a homogeneous ideal of a polynomial ring $R$ with $\dim(R)=d$. 

If $\alpha_{d-i,j}(R/I)$ is a coextremal generic annihilator number then $\sreg_{i}(I)=j+1$. 
Moreover, if $\sreg_{i-1}(I)<\sreg_i(I)=j+1$ then $\alpha_{d-i,j}(R/I)$ is a coextremal annihilator number. 

Consequently the coextremal annihilator numbers control both the jumps and the values attained by the sequence 
$
 \sreg_1(I) \leq  \sreg_2(I) \leq  \ldots \leq \sreg_d(I).
$
\end{cor}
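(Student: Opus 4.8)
The plan is to read everything off the equality
\[
\sreg_i(I)=\max\{j \mid \alpha_{sj}(R/I)\neq 0,\ s \geq d-i\}+1
\]
from \Cref{cor:sregmax}, treating the table of generic annihilator numbers as a purely combinatorial object: $\sreg_i(I)-1$ is the largest column index $j$ for which some entry $\alpha_{sj}(R/I)$ in rows $s\geq d-i$ is nonzero. Both implications are then short deductions from the definition of a coextremal annihilator number, and the closing remark is a packaging of the two implications together with the monotonicity already recorded in \Cref{cor:sregmax}.

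For the first implication, suppose $\alpha_{d-i,j}(R/I)$ is coextremal. Since it is nonzero and lies in row $s=d-i\geq d-i$, the displayed formula gives $\sreg_i(I)\geq j+1$. For the opposite inequality I would argue that no nonzero entry in rows $s\geq d-i$ can occur in a column $t>j$: such an entry would satisfy $t\geq j$ and $(s,t)\neq(d-i,j)$, contradicting coextremality. Hence the maximum in the formula equals $j$ and $\sreg_i(I)=j+1$.

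For the converse, assume $\sreg_{i-1}(I)<\sreg_i(I)=j+1$. Reading the formula at index $i$ yields that some row $s_0\geq d-i$ has $\alpha_{s_0,j}(R/I)\neq 0$ and that all entries in rows $s\geq d-i$ and columns $t>j$ vanish. Reading it at index $i-1$, where $d-(i-1)=d-i+1$, together with $\sreg_{i-1}(I)\leq j$, yields that all entries in rows $s\geq d-i+1$ and columns $t\geq j$ vanish. The latter forces $s_0=d-i$, so $\alpha_{d-i,j}(R/I)\neq 0$; and any $(s,t)\neq(d-i,j)$ with $s\geq d-i$ and $t\geq j$ is killed either by the first vanishing statement (if $s=d-i$, which forces $t>j$) or by the second (if $s\geq d-i+1$), so these two cases are exhaustive. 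Thus $\alpha_{d-i,j}(R/I)$ is coextremal. Along the way one notes that a fixed row carries at most one coextremal entry, since a second one, necessarily in a different column, would violate the coextremality of the one in the smaller column.

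Finally, since $\sreg_1(I)\leq\cdots\leq\sreg_d(I)$ is nondecreasing, it is determined by where it strictly increases and by the values it then attains. The converse implication says that every strict increase, occurring at some position $i$, is witnessed by the coextremal annihilator number $\alpha_{d-i,\sreg_i(I)-1}(R/I)$, while the first implication says that each coextremal $\alpha_{d-i,j}(R/I)$ in turn prescribes $\sreg_i(I)=j+1$; combining these with monotonicity reconstructs the entire sequence. I do not anticipate a genuine obstacle: the mathematical content is entirely in \Cref{cor:sregmax}, and the only point needing care is keeping the index shift between $d-i$ and $d-i+1$ straight and using the strict inequality $\sreg_{i-1}(I)<\sreg_i(I)$ precisely to clear the rows strictly above $d-i$ in column $j$.
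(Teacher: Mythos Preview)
Your proof is correct and follows essentially the same approach as the paper: both arguments read the two implications directly off the identity $\sreg_i(I)=\max\{j \mid \alpha_{sj}(R/I)\neq 0,\ s \geq d-i\}+1$ from \Cref{cor:sregmax}, with the paper phrasing the converse in terms of $\reg(0:_{M^{(t)}}\ell_t)$ before translating to the $\alpha$'s, while you work with the $\alpha$'s throughout. The closing paragraph about how the coextremal numbers determine the full sequence via monotonicity also matches the paper's reasoning.
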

\begin{proof}
Suppose $\alpha_{d-i,j}(R/I)$ is a coextremal generic annihilator number. Then from \Cref{def:coextremal} it follows that $\max\{j \mid \alpha_{st}(R/I)\neq 0, s \geq d-i\}=j$. Combining this with \Cref{cor:sregmax} yields  $\sreg_{i}(I)=j+1$.

Suppose now that we have $\sreg_{i-1}(I)<\sreg_i(I)=j+1$. This leads to the inequality
$$\sreg_{i-1}(I)=\max\{\reg(0:_{M^{(u)}} \ell_u) \mid u \leq  i-1\}+1< \max\{\reg(0:_{M^{(u)}} \ell_u) \mid u \leq  i\}+1=\sreg_{i}(I),$$ 
which forces $\reg(0:_{M^{(i)}} \ell_i)> \reg(0:_{M^{(u)}} \ell_u)$ for each $u<i$. Casting this in terms of generic annihilator numbers yields
\[
j=\max\{t \mid \alpha_{d-i,t}(R/I)\neq 0 \}>\max\{ t\mid \alpha_{d-u,t}(R/I)\neq 0 \text{ for some } u<i\},
\]
which implies $\alpha_{st}(R/I)=0$ for $(s,t)\neq (d-i,j), s\geq d-i, t\geq j$, that is, $\alpha_{d-i,j}(R/I)$ is coextremal.

Finally, to justify how the coextremal annihilator numbers control the sequence of sectional regularities it suffices to see that the distinct values in the sequence are given by
\[
\{ \sreg_1(I),  \sreg_2(I), \ldots, \sreg_d(I)\}=\{j +1 \mid \alpha_{ij}(R/I) \text{ is extremal for some } i\}
\]
and the jumps are given by $\sreg_{i-1}(I)<\sreg_i(I)$ if and only if there exists $j$ such that $\alpha_{d-i,j}(R/I)$ is a coextremal annihilator number. Due to its monotonicity, the sequence of sectional regularities is completely determined by these two pieces of information.
\end{proof}

Finally, we discuss an alternate approach to almost regular sequences that uses generic linear forms, that is, linear forms whose coefficients are variables in an extension field of the coefficient field of $R$. Generic linear forms have the advantage that the {\em same} set of such forms gives an almost regular sequence on {\em any} finitely generated $R$-module. Therefore this approach is better suited to arguments where $R$-modules vary in families. We make use of such arguments in \cref{s:linear}.

\begin{notation}
\label{not:generic}
Let $u_{ij}$ be distinct variables with $1\leq i,j\leq d$, set $F$ to be the fraction field of $k[u_{ij}]$, and $R'=F\otimes_k R=F[x_1,\ldots, x_d]$. Additionally, for any $R$-module $M$ set $M'=M\otimes_R R'$ and for $1\leq i\leq d$ set $L_i=\sum_{j=1}^d u_{ij}x_j$.
\end{notation}

 The following is the analogue of \Cref{prop:regIvsgin(I)} for generic forms.

\begin{prop}
\label{prop:regIvsgin(I)generic}
Let $I$ be a homogeneous ideal of a polynomial ring $R=k[x_1,\dots, x_d]$. According to \Cref{not:generic}, let $I'=IR'$ and set
$L_i=\sum_{j=1}^d u_{ij}x_j$.  Then $L_d, \ldots, L_1$ is an almost regular sequence on $R'/I'$ and the following identities hold for each $0\leq i\leq d$
\[
H\left(I'+(L_{i+1}, \ldots, L_d)\right) =H\left( \gin(I)+(x_{i+1}, \ldots, x_d)\right).
\]
\[
\reg\left(I'+(L_{i+1}, \ldots, L_d)\right) =\reg\left( \ginr(I)+(x_{i+1}, \ldots, x_d)\right).
\]
\end{prop}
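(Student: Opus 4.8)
The strategy is to reduce \Cref{prop:regIvsgin(I)generic} to \Cref{prop:regIvsgin(I)} by specializing the generic linear forms $L_i$ to sufficiently general linear forms with coefficients in $k$. First I would observe that $L_d,\ldots,L_1$ is an almost regular sequence on $R'/I'$: this is the content of the generic-forms version of \Cref{lem:almostregopen}, which holds because the non-emptiness of the Zariski open set $U_{ar}(R/I)\subset\GL_d(k)$ guarantees that the condition defining an almost regular sequence is a non-vanishing condition on the $u_{ij}$, hence is satisfied at the generic point Spec$\,F$; equivalently one cites \cite[\S4.3]{HH} directly in the generic-forms formulation. I would then record that for a homogeneous ideal $J\subseteq R'$ generated by forms whose coefficients all lie in the polynomial subring $k[u_{ij}]\subseteq F$, both the Hilbert function and the regularity of $R'/J$ are computed over $F$ by Gröbner-basis arguments whose intermediate data again lie in $k[u_{ij}]$, so these numerical invariants are invariant under specialization $u_{ij}\mapsto g_{ij}\in k$ for $(g_{ij})$ avoiding the finitely many relevant polynomials.

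\smallskip

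The core of the argument is then the following. Fix $0\le i\le d$ and consider the ideal $I'+(L_{i+1},\ldots,L_d)\subseteq R'$. Its generators have coefficients in $k[u_{ij}]$. Choose $\gamma=(g_{ij})\in\GL_d(k)$ lying simultaneously in $U_{sreg}(I)$ (from \Cref{prop:regIvsgin(I)}) and outside the proper closed subsets of $\GL_d(k)$ where the Hilbert function or regularity of the specialized ideal could jump — such $\gamma$ exist because $k$ is infinite and we are intersecting finitely many non-empty Zariski opens. Specializing $u_{ij}\mapsto g_{ij}$ sends $L_i\mapsto\ell_i=\sum_j g_{ij}x_j$ and sends $I'+(L_{i+1},\ldots,L_d)$ to $I+(\ell_{i+1},\ldots,\ell_d)\subseteq R$. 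By the semicontinuity/specialization principle recalled above,
\[
H\bigl(I'+(L_{i+1},\ldots,L_d)\bigr)=H\bigl(I+(\ell_{i+1},\ldots,\ell_d)\bigr),\qquad
\reg\bigl(I'+(L_{i+1},\ldots,L_d)\bigr)=\reg\bigl(I+(\ell_{i+1},\ldots,\ell_d)\bigr).
\]
Now \Cref{prop:regIvsgin(I)} identifies the right-hand sides with $H(\ginr(I)+(x_{i+1},\ldots,x_d))$ and $\reg(\ginr(I)+(x_{i+1},\ldots,x_d))$ respectively (for $1\le i\le d$ this is exactly the stated content; for $i=0$ and $i=d$ both sides are trivial to compute directly, since $R'/(L_1,\ldots,L_d)\cong F$ and $I'+(L_1,\ldots,L_d)=R'$ when $I\ne 0$, matching the behavior of $\ginr(I)+(x_1,\ldots,x_d)$). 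This chain of equalities is precisely the assertion of the proposition.

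\smallskip

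\textbf{Main obstacle.} The one point requiring genuine care is the specialization principle: why a \emph{single} general $\gamma\in\GL_d(k)$ simultaneously preserves the Hilbert function and regularity of \emph{all} the ideals $I'+(L_{i+1},\ldots,L_d)$ for $0\le i\le d$ at once. This is handled by noting there are only finitely many indices $i$, that for each one the ``bad'' loci (where the Hilbert function in a given degree drops, or where regularity is not achieved) are proper closed subsets of $\GL_d(k)$ cut out by the vanishing of certain polynomials in the $g_{ij}$ — these polynomials are, up to clearing denominators, the numerators of the ranks of the relevant multiplication/syzygy matrices over $F$ — and that a finite intersection of non-empty Zariski opens in $\GL_d(k)$ is non-empty over the infinite field $k$. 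Alternatively, and perhaps more cleanly, one avoids specialization entirely by re-running the proof of \Cref{prop:regIvsgin(I)} verbatim with $\ell_i$ replaced by $L_i$ and $R$ by $R'$: the only inputs used there are (i) $\ginr(I')=\ginr(I)R'$, which holds because passing to the generic point commutes with taking generic initial ideals, (ii) the Hilbert function equality of \Cref{prop:properties}(1), and (iii) the almost-regularity of $L_d,\ldots,L_1$; all three are available in the $R'$ setting, so the induction on $i$ via \eqref{eq:1} goes through unchanged. I would present the specialization argument as the main line and remark on this alternative.
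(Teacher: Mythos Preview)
Your proposal is correct, and both routes you sketch are valid. Interestingly, the paper follows essentially your \emph{alternative} rather than your main specialization line.

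Concretely, the paper constructs the $R'$-automorphism $\phi$ sending $L_i\mapsto x_i$ (via the inverse matrix $(v_{ij})=(u_{ij})^{-1}$) and then proves the key identity $\init(\phi(I'))=\ginr(I)R'$ --- this is the precise form of your claim ``$\ginr(I')=\ginr(I)R'$'', and the paper does not take it for granted. Its proof of this identity is itself a short specialization argument: write $\init(\phi(I'))=(m_1,\ldots,m_s)$ with $m_t=\init(f_t)$, pick $\gamma\in U_{gin}^{-1}(I)$ avoiding the zeros and poles of the leading coefficients of the $f_t$, evaluate to obtain the containment $\init(\phi(I'))\subseteq\ginr(I)R'$, and then use equality of Hilbert functions to upgrade the containment to an equality. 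With that in hand the Hilbert-function chain $H(M^{(i)})=H(\phi(M^{(i)}))=\cdots=H(N^{(i)})$ follows directly from preservation of Hilbert functions under initial ideals, and the paper then literally says the induction via \eqref{eq:1} from \Cref{prop:regIvsgin(I)} can be repeated verbatim to obtain the regularity statement.

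Your main line --- specialize the entire ideal $I'+(L_{i+1},\ldots,L_d)$ at once and invoke constancy of Hilbert function and regularity on a dense open of $\GL_d(k)$, then quote \Cref{prop:regIvsgin(I)} as a black box --- is a genuinely different packaging. It is cleaner in that it reuses \Cref{prop:regIvsgin(I)} wholesale rather than re-running its induction, but it leans on an external ingredient (generic constancy of a Gr\"obner basis, or equivalently upper semicontinuity of Betti numbers in a family) that the paper never states. The paper's route avoids that by isolating the single claim $\init(\phi(I'))=\ginr(I)R'$ and proving it with tools already set up in \S\ref{s:background}. Either approach would be acceptable; if you present the specialization argument, the Gr\"obner-basis justification you give for handling all degrees simultaneously is the right move and should be made explicit.
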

\begin{proof}
 It is shown e.~g.~in \cite[Proposition 2.1]{AH} that for any $R$-module $M$, the sequence $L_d,\ldots, L_1$ and any permutation thereof is almost regular on $M'$. 
 
 Set $\phi:R'\to R'$ be the automorphism sending $L_i \mapsto x_i$. To be precise, consider the matrix $(v_{ij})=(u_{ij})^{-1}$ with entries in $F$. Then $\phi$ is defined by $\phi(x_i)=\sum_{j=1}^d v_{ij}x_j$. We claim that $\init(\phi(I))=\gin(I)R'$.
 
 Suppose $\init(\phi(I'))=(m_1, \ldots, m_s)$ with $m_i=\init(f_i)$ for some polynomials $f_i\in I'$ and let $c_i$ be the coefficient of $m_i$ in $f_i$. Then $c_i\in F$ are rational functions of $u_{ij}$. Consider the Zariski open set $U'\subset \A^{d^2}_k$  where none of the denominators and numerators of the functions $c_i$  vanish and set $U=U_{gin}^{-1}(I)\cap U'$.  
 Let $\gamma=(g_{ij})\in U$ and $e_\gamma:R'\to R$ the evaluation homomorphism that maps $u_{ij}\mapsto g_{ij}$. Then $e_\gamma\circ \phi(f)= \gamma^{-1}\cdot f$ for any $f\in R$. The membership
\[
m_i=\init(e_\gamma(f_i))\in \init(e_\gamma\circ \phi(I'))=\init(\gamma^{-1}(I))R'=\ginr(I)R'
\]
shows there is a containment $\init(\phi(I)) \subseteq \ginr(I)R'$. Since there are equalities 
\[H(\init(\phi(I)))=H(\phi(I'))=H(I')=H(I)=H(\ginr(I))= H (\ginr(I)R'),\]
 the preceding containment is in fact an equality.
 
 Set $M^{(i)}=R/(I'+(L_{i+1},\cdots,L_d))$ and $N^{(i)}=R/(\ginr(I)R'+(x_{i+1},\cdots,x_d))$.
 Then 
 \begin{eqnarray*}
 H(M^{(i)}) &=& H(\phi (M^{(i)})) = H\left( R/(\phi(I')+(x_{i+1},\cdots,x_d))\right)\\
 &=& H\left( R/\init(\phi(I')+(x_{i+1},\cdots,x_d))\right)\\
 &=& H\left( R/\init(\phi(I'))+(x_{i+1},\cdots,x_d)\right)\\
 &=& H\left( R/(\ginr(I)R'+(x_{i+1},\cdots,x_d)\right)\\
 &=& H(N^{(i)}).
 \end{eqnarray*}
 Now the proof of \Cref{prop:regIvsgin(I)} can be repeated verbatim to deduce $\reg(M^{(i)})=\reg(N^{(i)})$ for $0\leq i\leq d$, as desired. 
\end{proof}

\section{Equivalent invariants}
\label{s:2}

In this section we offer multiple equivalent characterizations for the notion of sectional regularity, establishing in particular   the equivalence of sectional regularity and axial constants. Our main result is the following.

\begin{thm}
\label{thm:equiv}
Let $I$ be a homogeneous ideal of a polynomial ring $R=k[x_1, \ldots, x_d]$ with $k$ an infinite  field of characteristic zero or larger than any exponent appearing in the generators of the reverse lexicographic generic initial ideal $\ginr(I)$. The following are equal:
\begin{enumerate}
\item the $i$-th sectional regularity number $\sreg_i(I)$ (see \Cref{def:sreg}) 
\item $\sreg_i(\ginr(I))=\reg(\ginr(I) + (x_{i+1},\ldots,x_d))$
\item the largest $i$-th partial degree of a minimal generator of $\ginr(I)$ (see \Cref{def:degi}) 
\item the $i$-th axial constant $a_i(I)$ (see \Cref{def:axial}), provided $a_i(I)<\infty$
\item $\red_{d-i}(R/I)+1$, where $\red_{d-i}(R/I)$ is an $s$-reduction number for $R/I$ (see \Cref{def:sred}), provided $\red_{d-i}(R/I)<\infty$.
\end{enumerate}

Moreover, the quantities listed in (1) and (2)  and, separately,  those listed in (4) and (5) are equal without any restrictions on characteristic.

\end{thm}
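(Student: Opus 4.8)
The plan is to prove the chain $(1)=(2)=(3)=(4)=(5)$ link by link, recording which links are unconditional and which invoke the fact that $J:=\ginr(I)$ is strongly stable (which is exactly where \Cref{prop:properties}(5), hence the hypothesis on the characteristic, enters). The links $(1)=(2)$ and $(4)=(5)$ will be characteristic-free; $(2)=(3)$ and $(3)=(4)$ will use strong stability. Throughout I use that Castelnuovo--Mumford regularity of a finitely generated graded module is independent of the polynomial ring over which it is presented. For $(1)=(2)$ I apply \Cref{prop:regIvsgin(I)} twice: applied to $I$ itself its regularity identity reads $\sreg_i(I)=\reg(\ginr(I)+(x_{i+1},\dots,x_d))$, and since $\ginr(I)$ is Borel-fixed (\Cref{prop:properties}(4)) it is its own reverse lexicographic generic initial ideal, so applying the proposition again to $J=\ginr(I)$ gives $\sreg_i(\ginr(I))=\reg(J+(x_{i+1},\dots,x_d))$; comparing the two displays yields $(1)=(2)$ with no restriction on the characteristic.

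For $(2)=(3)$ and $(3)=(4)$ I assume the characteristic hypothesis, so $J=\ginr(I)$ is strongly stable. The starting point is the isomorphism $R/(J+(x_{i+1},\dots,x_d))\cong S_i/\widetilde J$, where $S_i=k[x_1,\dots,x_i]$ and $\widetilde J=J\cap S_i$; one checks that $\widetilde J$ is again strongly stable and that its minimal generators are precisely the minimal generators of $J$ supported on $\{x_1,\dots,x_i\}$. Since the regularity of a strongly stable ideal equals the top degree of a minimal generator, $\sreg_i(J)=\reg(\widetilde J)$ equals the largest degree among the minimal generators of $J$ supported on $\{x_1,\dots,x_i\}$. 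A bookkeeping argument with the strong stability exchange property (replace each occurrence of $x_{i+1},\dots,x_d$ in a minimal generator by $x_i$, keeping track of degrees, and use that the smallest strongly stable ideal containing $x_i^{a_i(I)}$ consists of all monomials whose first $i$ exponents sum to at least $a_i(I)$) identifies this number with the largest $i$-th partial degree (\Cref{def:degi}) of a minimal generator of $J$, establishing $(2)=(3)$; alternatively one can route this through \Cref{cor:sregmax} together with the explicit description of the generic annihilator numbers of a strongly stable ideal, cf.\ \cite{AH}. For $(3)=(4)$, when $a_i(I)<\infty$ the monomial $x_i^{a_i(I)}$ lies in $J$ and, its only proper divisors being lower powers of $x_i$ (none in $J$), it is a minimal generator, of $i$-th partial degree $a_i(I)$; conversely, using again that $J$ contains every monomial whose first $i$ exponents sum to $\ge a_i(I)$, a minimal generator with $i$-th partial degree exceeding $a_i(I)$ would remain in $J$ after deleting one of its first $i$ variables, contradicting minimality. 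Hence the largest $i$-th partial degree of a minimal generator of $J$ is exactly $a_i(I)$.

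Finally, $(4)=(5)$ is a direct translation of \Cref{def:sred}: the $s$-reduction number is computed from $R/I$ together with $s$ general linear forms, and passing to the reverse lexicographic generic initial ideal turns those forms into a coordinate subspace, so that $\red_{d-i}(R/I)+1$ is read off, via a Hilbert-function comparison of the type carried out in \Cref{prop:regIvsgin(I)}, as the least $j$ with $x_i^{j}\in\ginr(I)$, namely $a_i(I)$; because this uses only the monomial ideal $\ginr(I)$ and Hilbert functions — not strong stability — it holds in every characteristic, just as $(1)=(2)$ does. It remains to record that $(1),(2),(3)$ are finite and mutually equal in all cases, and that the side conditions ``$a_i(I)<\infty$'' and ``$\red_{d-i}(R/I)<\infty$'' hold simultaneously, so that whenever one side of $(4)=(5)$ is finite so is the other.

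The main obstacle is the combinatorial core of $(2)=(3)$: showing that no minimal generator of $J=\ginr(I)$ has $i$-th partial degree exceeding $\sreg_i(J)=\reg(\widetilde J)$. This is where the full structure of strongly stable ideals is genuinely used — the matching relies on the Borel closure of a pure power $x_i^{a_i(I)}$ being the set of monomials whose first $i$ exponents sum to $\ge a_i(I)$ — and it is precisely the step that would fail for ideals that are merely Borel-fixed in small characteristic, which is why $(2)$–$(4)$ need the hypothesis while $(1)=(2)$ and $(4)=(5)$ do not.
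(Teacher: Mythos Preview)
Your chain $(1)=(2)=(3)=(4)$ is essentially the paper's argument: \Cref{prop:regIvsgin(I)} for $(1)=(2)$, and the strongly-stable combinatorics behind \Cref{prop:sreg=alphial} for $(2)=(3)$. The only cosmetic difference is that you close the loop via $(3)=(4)$ using the Borel closure of $x_i^{a_i(I)}$, whereas the paper instead proves $(2)=(4)$ directly via \Cref{lem:addvars} and \Cref{prop:axial=sectional reg}; both routes are short and rest on the same exchange property, so nothing is gained or lost.

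Your treatment of $(4)=(5)$, however, is where the sketch thins out. Two nontrivial facts are being used that are not ``just Hilbert functions'': first, that the minimum in \Cref{def:sred} is actually attained by a \emph{general} tuple of linear forms (equivalently, that $r_s(R/I)=r_s(R/\ginr(I))$); the paper imports this from Trung \cite{T2}. Second, even after the Hilbert-function comparison gives $\min\{j:(\ginr(I)+(x_{i+1},\dots,x_d))_j=R_j\}$, identifying this with $\min\{j:x_i^{j}\in\ginr(I)\}$ requires the implication ``$x_i^{j}\in\ginr(I)\Rightarrow(\ginr(I)+(x_{i+1},\dots,x_d))_j=R_j$'', which is precisely the content of \Cref{lem:addvars} (for strongly stable) or \Cref{lem:HT} (as cited from \cite{HT}). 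The reverse implication is indeed free, but the forward one is not a Hilbert-function statement at all --- it needs the combinatorial structure of $\ginr(I)$. So your assertion that this link ``uses only the monomial ideal $\ginr(I)$ and Hilbert functions --- not strong stability'' hides exactly the step that carries the weight; the paper handles it by citing \cite{T2} and \cite{HT} rather than by the Hilbert-function transfer of \Cref{prop:regIvsgin(I)}.
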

\begin{proof}
This result collects together  \Cref{prop:regIvsgin(I)}, which shows (1)=(2), \Cref{prop:sreg=alphial}, which shows (2)=(3), 
 \Cref{prop:axial=sectional reg}, which shows (2)=(4), and \Cref{prop:axial=red}, which shows (4)=(5).
The hypothesis on the characteristic  ensures that $\ginr(I)$ is strongly stable; see \cite[Proposition 2.7]{BS} or \cite[Proposition 4.2.4]{HH}. This technical condition is needed in the proofs of \Cref{prop:axial=sectional reg} and \Cref{prop:sreg=alphial},  but not for \Cref{prop:regIvsgin(I)} and \Cref{prop:axial=red}. This justifies the last claim regarding those equalities that are independent of the characteristic of the base field.
\end{proof}

The hypothesis on the characteristic of the base field in \Cref{thm:equiv} is needed, as illustrated by the next examples. In these examples we employ gradings of the polynomial ring termed partial gradings, denoted $\deg_i$, and given by $\deg_i(x_j)=1$ if $j\leq i$ and $\deg_i(x_j)=0$ for $i<j$.

\begin{ex}
\label{ex:charp}
Consider the ideal $I=(x_1^p, \ldots, x_d^p)\subset R=k[x_1,\ldots, x_d]$, where $\ch(k)=p$. 
It is easy to see that $I=\ginr(I)$ and that this ideal is not strongly stable, however it is Borel-fixed. Moreover, one has 
\begin{itemize}
\item $a_i(I)=p$ for each $1\leq i\leq d$
\item $\sreg_I(I)=\reg(\ginr(I) + (x_{i+1},\ldots,x_d))=\reg(x_1^p, \ldots, x_i^p, x_{i+1}, \ldots, x_d)=i(p-1)+1$
\item  the largest $i$-th partial degree of a minimal generator of $\ginr(I)$ is $p$
\item  the $s$-reduction numbers are $\red_s(R/I)=(p-1)(d-s)$ cf.~\cite[Example 1.3]{HT}
\end{itemize}
Thus here the axial constants agree with the the largest $i$-th partial degree of a minimal generator of $\ginr(I)$ and the sectional regularity numbers satisfy $\sreg(I)=\red_{d-i}(R/I)+1$. However the axial constant and the sectional regularity disagree when $i>1$.
\end{ex}

In the previous example the connection between axial constants and the largest $i$-th partial degree of a minimal generator of $\ginr(I)$ appears more robust than we prove in \Cref{prop:axial=alphial}. However the next example shows that our result is in fact the best possible. 

\begin{ex} 
\label{ex:charp2}
Let $p$ be a prime integer and set $n=p$ if $p$ is odd and $n=4$ if $p=2$.
Consider the ideal $I=(x_1^n, x_2^n, x_3^n\ldots, x_d^n)+(x_1^{n-1}x_2^2)\subset R=k[x_1,\ldots, x_d]$, where $\ch(k)=p$. 
Then $I=\ginr(I)$, the axial constants are $a_i(I)=n$ for $1\leq i\leq d$ but  the largest partial degree of a minimal generator of $\ginr(I)$ with respect to the grading $\deg_i$  is $n+1$, provided $i\geq 2$.  Moreover the $i$-th sectional regularity is $\sreg_1(I)=n$ and $\sreg_i(I)=(n-1)i$ for $2\leq i\leq d$. Therefore in this example the quantities listed in (1), (3), and (4) of \Cref{thm:equiv} are pairwise different for $i\geq 2$.\end{ex}

We now proceed to show the equivalences between the families of constants in \Cref{thm:equiv}.

\subsection{Axial constants}
To study the axial constants, we first establish some useful properties for the sequence formed by these invariants.
\begin{rem}
\label{rem:monotoneaxial}
From \Cref{thm:equiv} and \Cref {cor:sregmax} one deduces that if $\ginr(I)$ is a strongly stable ideal, the axial constants form a non-decreasing sequence
$
a_1(I)\leq a_2(I) \leq \cdots \leq a_d(I).
$
%This property is also easy to deduce directly from the strongly stable property of $\ginr(I)$.
\end{rem}
 However the sequence of axial constants  is usually unbounded. We proceed by delineating the range in which the axial constants are finite.

\begin{lem}
\label{lem:finiteaxial}
Let $I$ be a homogeneous ideal and assume $\ginr(I)$ is strongly stable. Then $a_i\in \N$ if and only if $1\leq i\leq \codim(I)$.
\end{lem}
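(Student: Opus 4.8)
The statement to prove is: if $\ginr(I)$ is strongly stable, then $a_i(I)\in\N$ if and only if $1\leq i\leq\codim(I)$. My plan is to unwind the definition of $a_i(I)$ in terms of the associated primes of $R/\ginr(I)$. By definition $a_i(I)$ is finite precisely when some power $x_i^j$ lies in $\ginr(I)$, equivalently when $x_i\in\sqrt{\ginr(I)}$, equivalently when $x_i$ lies in every minimal prime of $\ginr(I)$. Since $\ginr(I)$ is Borel-fixed (being a gin), \Cref{prop:properties}(3) tells us that every associated prime $P\in\Ass(R/\ginr(I))$ has the form $P=(x_1,\ldots,x_j)$ for some $j$ with $\codim(I)\leq j\leq d-\operatorname{depth}(I)$. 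In particular the unique minimal prime of $\ginr(I)$ is $(x_1,\ldots,x_c)$ where $c=\codim(I)$ (it has the smallest height among the associated primes, so it is the height-$c$ prime in this chain).

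\textbf{Key steps.} First I would record the equivalence $a_i(I)<\infty \iff x_i^j\in\ginr(I)\text{ for some }j \iff x_i\in\sqrt{\ginr(I)}$. Second, I would identify $\sqrt{\ginr(I)}$: since the associated primes of $R/\ginr(I)$ are all of the form $(x_1,\ldots,x_j)$ and form a chain by inclusion (they are linearly ordered since all contain $(x_1,\ldots,x_c)$ and are contained in $(x_1,\ldots,x_{d-\operatorname{depth}})$), the unique minimal prime is $(x_1,\ldots,x_c)$, so $\sqrt{\ginr(I)}=(x_1,\ldots,x_c)$. Hence $x_i\in\sqrt{\ginr(I)}$ if and only if $i\leq c=\codim(I)$. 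Third, to make this airtight in both directions: if $i\leq c$, then $x_i\in(x_1,\ldots,x_c)=\sqrt{\ginr(I)}$, so some power $x_i^j\in\ginr(I)$ and $a_i(I)$ is finite; conversely if $i>c$, then $x_i\notin(x_1,\ldots,x_c)$, and since no monomial that is a pure power of $x_i$ can lie in an ideal whose radical is $(x_1,\ldots,x_c)$ (as $x_i^j\notin\sqrt{\ginr(I)}$), we get $a_i(I)=\infty$. Combining, $a_i(I)\in\N\iff 1\leq i\leq\codim(I)$.

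\textbf{Main obstacle.} The only subtle point is justifying that the minimal prime of $\ginr(I)$ is exactly $(x_1,\ldots,x_c)$ with $c=\codim(I)$, rather than merely some $(x_1,\ldots,x_j)$ with $j\geq c$. This follows because $\codim(\ginr(I))=\codim(I)$ (the Hilbert function, hence the dimension, is preserved by taking gin, see \Cref{prop:properties}(1)), so the minimal-height associated prime has height exactly $c$, and among the primes $(x_1,\ldots,x_j)$ the one of height $c$ is $(x_1,\ldots,x_c)$. The strong stability hypothesis is what guarantees $\ginr(I)$ behaves well enough that pure powers detect membership in the radical cleanly; concretely, strong stability (or even just Borel-fixedness) is what gives the structural description of $\Ass$ via \Cref{prop:properties}(3), and in a strongly stable ideal $x_i\in\sqrt{J}$ forces $x_i^j\in J$ for $j=\operatorname{sat}$-type bound, which is automatic here. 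I expect this associated-primes bookkeeping to be the entire content of the argument; no computation beyond citing \Cref{prop:properties} is needed.
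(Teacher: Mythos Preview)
Your proposal is correct and follows essentially the same route as the paper: both reduce $a_i(I)<\infty$ to $x_i\in\sqrt{\ginr(I)}$, then use \Cref{prop:properties}(3) to conclude $\sqrt{\ginr(I)}=(x_1,\ldots,x_{\codim(I)})$ since the associated primes form a chain of the shape $(x_1,\ldots,x_j)$ with $j\ge\codim(I)$. Your extra remark that $\codim(\ginr(I))=\codim(I)$ via the Hilbert function is a welcome clarification the paper leaves implicit; the only unnecessary aside is the last sentence about strong stability forcing $x_i^j\in J$, which is just the definition of the radical and needs no hypothesis.
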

\begin{proof}
According to \Cref{def:axial}, $a_i\in \N$ if and only  $x_i\in \sqrt{\ginr(I)}$. Since $\ginr(I)$ is strongly stable, the associated primes of $\ginr(I)$ are of the form $(x_1,\ldots, x_j)$ with $\codim I\leq j$; see \Cref{prop:properties}(3). Therefore we have
\[
\sqrt{\ginr(I)}=\bigcap_{P\in \Ass(\ginr(I))} P=(x_1,\ldots, x_{\codim(I)})
\]
and consequently $x_i\in \sqrt{\ginr(I)}$ if and only if $1\leq i\leq \codim(I)$. The conclusion now follows.
\end{proof}

\begin{lem}
\label{lem:addvars}
Suppose $J$ is a strongly stable monomial ideal of a polynomial ring $R$. Then the monomial $x_i^j$ is in $J$ if and only if $(J + (x_{i+1},\ldots,x_d))_j = R_j.$
\end{lem}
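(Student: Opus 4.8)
The statement is an ``if and only if'' about when a pure power $x_i^j$ lies in a strongly stable monomial ideal $J$, phrased in terms of whether the quotient $R/(J + (x_{i+1},\ldots,x_d))$ vanishes in degree $j$. Note that $(J + (x_{i+1},\ldots,x_d))_j = R_j$ is equivalent to saying that every monomial of degree $j$ lies in $J + (x_{i+1},\ldots,x_d)$. Since the monomials of degree $j$ that are \emph{not} in $(x_{i+1},\ldots,x_d)$ are exactly the monomials of degree $j$ in the variables $x_1,\ldots,x_i$, the condition $(J + (x_{i+1},\ldots,x_d))_j = R_j$ says precisely that every degree-$j$ monomial in $x_1,\ldots,x_i$ belongs to $J$. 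So the content of the lemma is: $x_i^j \in J$ if and only if $\mu \in J$ for every monomial $\mu$ of degree $j$ in $x_1,\ldots,x_i$.

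\textbf{Key steps.} The reverse direction is trivial: if every degree-$j$ monomial in $x_1,\ldots,x_i$ is in $J$, then in particular $x_i^j \in J$. For the forward direction, suppose $x_i^j \in J$ and let $\mu = x_1^{a_1}\cdots x_i^{a_i}$ be an arbitrary monomial with $a_1 + \cdots + a_i = j$. I want to show $\mu \in J$. The idea is to apply the strong stability property repeatedly to ``lower'' the monomial $x_i^j$ to $\mu$: strong stability lets us replace a factor $x_\ell$ by $x_m$ for any $m < \ell$ and stay inside $J$. Concretely, write $\mu = (x_i^j / x_i^{\,j - a_i}) \cdot (\text{stuff in } x_1,\ldots,x_{i-1})$; more carefully, I would argue by induction, peeling off one factor at a time. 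Starting from $x_i^j \in J$, since $x_i^j = (x_i^{j-1}) x_i$ and $1 \le a_1$-slot indices are all $< i$, strong stability gives $x_i^{j-1} x_{m} \in J$ for any $m < i$. Iterating, for any exponent vector $(a_1,\ldots,a_{i-1})$ with $a_1 + \cdots + a_{i-1} \le j$ we can successively trade $j - (a_1 + \cdots + a_{i-1}) = a_i$ of the $x_i$'s, replacing each by the appropriate smaller variable, to conclude $x_1^{a_1}\cdots x_{i-1}^{a_{i-1}} x_i^{a_i} \in J$. Formally this is an induction on the number of factors being relocated, at each step invoking the defining condition of strong stability with $\mu' x_i \in J \Rightarrow \mu' x_m \in J$ for $m < i$.

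\textbf{Main obstacle.} There is no serious obstacle here; the lemma is essentially a bookkeeping exercise once one unwinds the two sides. The only thing to be careful about is making the induction in the forward direction precise: one should phrase strong stability in the exact form ``$\nu x_\ell \in J$ and $m < \ell$ imply $\nu x_m \in J$'' and apply it to a monomial $\nu$ that still has a positive power of $x_i$ available to consume. An induction on $j - a_i$ (the number of $x_i$-factors that need to be converted) or equivalently on $a_1 + \cdots + a_{i-1}$ handles this cleanly: the base case $a_i = j$ gives $\mu = x_i^j \in J$ by hypothesis, and the inductive step converts one more $x_i$ into the next required smaller variable.
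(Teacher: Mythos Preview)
Your proof is correct and follows essentially the same approach as the paper: both reduce the condition $(J + (x_{i+1},\ldots,x_d))_j = R_j$ to the statement that every degree-$j$ monomial in $x_1,\ldots,x_i$ lies in $J$, note that one implication is immediate, and obtain the other by repeatedly applying the strongly stable exchange property to $x_i^j$. Your write-up spells out the induction on the number of variable swaps a bit more explicitly, but the argument is the same.
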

\begin{proof}
%To start, suppose $\gin(I)$ is minimally generated by $m_1,\ldots,m_s.$ Then we have that
%$$L = \gin(I) + (x_{i+1},\ldots,x_d) = (m_1,\ldots,m_s,x_{i+1},\ldots,x_d).$$

The ``if" direction is clear. % If $(J+ (x_{i+1},\ldots,x_d))_j = R_j$ then we see that $$x_i^j\in R_j\in J + (x_{i+1},\ldots,x_d),$$ from which it follows that $x_i^j \in J$ as desired.
For the ``only if" direction,  suppose $x_i^{j}\in J$. We show that $L=J + (x_{i+1},\ldots,x_d)$ contains all monomials of degree $j$ from which the desired statement follows. Let $m$ be a monomial of degree $j.$ If $x_k$ divides $m$ for some $k>i$ then it immediately follows that $m\in L$. Otherwise $m$ is expressed in terms of $x_1, \ldots, x_i$, and as $x_i^j\in J$ one can apply the strongly stable exchange property repeatedly to $x_i^j$ to obtain that $m\in J\subseteq L$.
\end{proof}

Now we are ready to give a homological characterization of the axial constants. The alert reader will realize that the next result justifies the equality $a_i(I) =\sreg_i(I)$ for $1\leq i\leq \codim(I)$ advertised in  \Cref{thm:equiv}.

\begin{prop}
\label{prop:axial=sectional reg}
Let $I$ be a homogeneous ideal of a polynomial ring and assume that $\ginr(I)$ is strongly stable.  Then
\begin{enumerate}
\item
$a_i(I) = \reg(\ginr(I) + (x_{i+1},\ldots,x_d))$ provided $a_i(I)<\infty$.
\item
there exists a nonempty Zariski open subset $U\subset GL_d(k)$ so that for each $\gamma=(g_{ij})\in U$ setting $\ell_i=\sum_{j=1}^d g_{ij}x_j$ yields 
$a_i(I) = \reg(I + (\ell_{i+1},\ldots,\ell_d))$.
\item setting $F = {\rm Frac}(k[u_{ij}])$, $R' = F[x_1,\ldots,x_d]$ and $I' = IR'$ and considering the generic linear forms $L_i = \sum_{j=1}^{d} u_{ij} x_j$ yields
$a_i(I) = \reg(I' + (L_{i+1},\ldots,L_d))$.
\end{enumerate}
\end{prop}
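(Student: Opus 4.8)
The plan is to reduce everything to part (1), the monomial statement, and then transport it to the two generic settings via the dictionaries already in hand. For part (1), assume $a_i(I)<\infty$ and write $J = \ginr(I)$, $L = J + (x_{i+1},\ldots,x_d)$. By hypothesis $J$ is strongly stable, so by \Cref{lem:addvars} the monomial $x_i^j$ lies in $J$ if and only if $L_j = R_j$; combined with \Cref{def:axial}, the integer $a_i(I)$ equals the least $j$ with $(R/L)_j = 0$. Since $J$ is strongly stable, $L = J + (x_{i+1},\ldots,x_d)$ is again strongly stable, hence so is its image in the quotient ring $k[x_1,\ldots,x_i]$; a strongly stable monomial ideal $J'$ of $k[x_1,\ldots,x_i]$ which contains a power of the last variable $x_i$ is $(x_1,\ldots,x_i)$-primary, so $R/L$ has finite length. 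For a finite-length module the regularity is the top nonvanishing degree of the Hilbert function, i.e.\ $\reg(R/L) = \max\{ m \mid (R/L)_m \ne 0\}$, so $\reg(R/L) = a_i(I) - 1$, and then $\reg(L) = \reg(R/L)+1 = a_i(I)$. (Strictly, if $a_i(I) = \infty$ the module need not have finite length, which is exactly why the finiteness hypothesis is imposed; conversely $a_i(I)<\infty$ forces $x_i \in \sqrt{J}$, and one should check this is equivalent to $R/L$ having finite length — this uses that the radical of a strongly stable ideal is an initial segment of the variables, as in \Cref{lem:finiteaxial}.)

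Granting (1), part (2) is immediate: by \Cref{prop:regIvsgin(I)} there is a nonempty Zariski open $U_{sreg}(I) \subset \GL_d(k)$ such that for $\gamma = (g_{ij}) \in U_{sreg}(I)$, setting $\ell_i = \sum_j g_{ij} x_j$ gives
\[
\reg\bigl(I + (\ell_{i+1},\ldots,\ell_d)\bigr) = \reg\bigl(\ginr(I) + (x_{i+1},\ldots,x_d)\bigr),
\]
and the right-hand side equals $a_i(I)$ by (1). Take $U = U_{sreg}(I)$. Similarly part (3) follows by invoking \Cref{prop:regIvsgin(I)generic} in place of \Cref{prop:regIvsgin(I)}: with $F = {\rm Frac}(k[u_{ij}])$, $R' = F[x_1,\ldots,x_d]$, $I' = IR'$ and $L_i = \sum_j u_{ij} x_j$, that proposition gives $\reg(I' + (L_{i+1},\ldots,L_d)) = \reg(\ginr(I) + (x_{i+1},\ldots,x_d)) = a_i(I)$.

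The only genuinely substantive step is part (1), and within it the one point deserving care is the passage from "$a_i(I) < \infty$" to "$R/(J + (x_{i+1},\ldots,x_d))$ has finite length", together with the identification of $\reg$ with the top degree of the Hilbert function in that situation; everything else is bookkeeping. I expect the main obstacle to be purely notational — keeping the strongly stable exchange property correctly aligned with the descending indexing convention adopted in Section \ref{appendix}, and making sure the finite-length claim is stated for the quotient ring $k[x_1,\ldots,x_i]$ rather than for $R$ itself, since $R/L$ is of course not finite length as an $R$-module in the naive sense unless one first kills $x_{i+1},\ldots,x_d$ (which is exactly what the added linear forms do). No new ideas beyond \Cref{lem:addvars}, \Cref{lem:finiteaxial}, \Cref{prop:regIvsgin(I)}, and \Cref{prop:regIvsgin(I)generic} should be required.
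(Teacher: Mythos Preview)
Your proposal is correct and follows essentially the same route as the paper: for (1) you invoke \Cref{lem:finiteaxial} to get zero-dimensionality of $R/(\ginr(I)+(x_{i+1},\ldots,x_d))$, use that regularity of a finite-length module is the top nonvanishing Hilbert degree, and then apply \Cref{lem:addvars}; parts (2) and (3) are deduced from (1) via \Cref{prop:regIvsgin(I)} and \Cref{prop:regIvsgin(I)generic}, exactly as in the paper. Your closing caveat about $R/L$ not being finite length ``in the naive sense'' is unnecessary: since $L$ already contains $x_{i+1},\ldots,x_d$, the quotient $R/L$ is genuinely a finite-dimensional $k$-vector space, hence a finite-length $R$-module, with no need to pass to a smaller polynomial ring.
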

\begin{proof}
(1) By \Cref{lem:finiteaxial}, $a_i(I)$ is finite if and only if $1\le i\le \codim(I)$. Under this assumption,  $R/\left(\ginr(I) + (x_{i+1},\ldots,x_d)\right)$ is a zero-dimensional ring,  which implies that 
\[
 \reg(\gin(I) + (x_{i+1},\ldots,x_d))=\min\{j\mid \gin(I) + (x_{i+1},\ldots,x_d))_j = R_j\}.
 \]
 The desired conclusion now follows by means of \Cref{lem:addvars}.
 
Assertions (2) and (3) follow from (1) by means of \Cref{prop:regIvsgin(I)} and \Cref{prop:regIvsgin(I)generic}.
\end{proof}

\begin{rem}
\label{remfirstlast}
The first axial constant is easily seen from \Cref{def:axial} to be the least degree of an element of $I$. Assuming $\ginr(I)$ is strongly stable and $\dim(R/I)=\operatorname{depth}(R/I)$, we now compute the largest finite axial constant, $a_c(I)$, where $c=\codim(I)$. By \Cref{prop:axial=sectional reg} 
\[
a_c(I)=\reg(\ginr(I) + (x_{c+1},\ldots,x_d))=\reg(\ginr(I))=\reg(I),
\]
where the middle equality holds because  \Cref{prop:properties}(4) ensures that $x_{c+1},\ldots,x_d$ is a regular sequence on $R/\ginr(I)$, and the last equality follows from \Cref{prop:properties}(3). 
\end{rem}

Given the close relationship between sectional regularity numbers and generic annihilator numbers observed in section \ref{appendix}, we are now able to characterize axial constants in terms of the coextremal annihilator numbers.

\begin{cor}
\label{cor:axialvalues}
Let $I$ be a homogeneous ideal of a polynomial ring $R$ with $\dim(R)=d$ and $\codim(I)=c$. Then, setting $a_0(I)=-\infty$ we have $a_{i-1}(I)<a_i(I)=j+1<\infty$  if and only if  $\alpha_{d-i,j}(R/I)$ is coextremal and moreover
\[
\{a_1(I), a_2(I), \ldots, a_c(I)\}=\{j\mid \alpha_{i,j}(R/I) \text{ is coextremal for some } i\}.
\]
 Thus the coextremal generic annihilator numbers control both the jumps and the values attained by the sequence 
$a_1(I) \leq  a_2(I) \leq  \ldots \leq a_c(I)$.
\end{cor}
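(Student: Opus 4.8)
The plan is to transport \Cref{cor:coextremal} through the identifications provided by \Cref{thm:equiv}. Since $\ginr(I)$ is strongly stable under the standing hypotheses, \Cref{lem:finiteaxial} shows that $a_i(I)$ is finite precisely for $1\le i\le c$, and on that range \Cref{thm:equiv} gives $a_i(I)=\sreg_i(I)$; hence the finite axial constants are exactly the initial segment $\sreg_1(I)\le\cdots\le\sreg_c(I)$ of the sectional regularity sequence, with $a_0(I)=-\infty$ a harmless convention at the bottom. It therefore suffices to combine \Cref{cor:coextremal} with the assertion that no coextremal generic annihilator number $\alpha_{d-i,j}(R/I)$ occurs with $i>c$.

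For that assertion I would first show the sectional regularity has stabilized by step $c$, namely $\sreg_c(I)=\sreg_{c+1}(I)=\cdots=\sreg_d(I)=\reg(I)$. By \Cref{thm:equiv} one has $\sreg_i(I)=\reg(\ginr(I)+(x_{i+1},\ldots,x_d))$, and by \Cref{prop:properties}(3) every associated prime of $R/\ginr(I)$ has the form $(x_1,\ldots,x_t)$ with $c\le t$, so adjoining the top $d-c$ variables does not change the regularity; this is the computation carried out in \Cref{remfirstlast}, yielding $\sreg_c(I)=\reg(\ginr(I))=\reg(I)=\sreg_d(I)$, and monotonicity from \Cref{cor:sregmax} then forces the entire tail to be constant. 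By \Cref{cor:coextremal}, a coextremal $\alpha_{d-i,j}(R/I)$ can exist for some $j$ only where $\sreg_{i-1}(I)<\sreg_i(I)$, hence only for $i\le c$, which is what was needed.

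The equivalences then follow formally. If $\alpha_{d-i,j}(R/I)$ is coextremal, the previous step gives $i\le c$ and \Cref{cor:coextremal} gives $\sreg_{i-1}(I)<\sreg_i(I)=j+1$, that is $a_{i-1}(I)<a_i(I)=j+1<\infty$; conversely $a_{i-1}(I)<a_i(I)=j+1<\infty$ forces $1\le i\le c$, so $\sreg_{i-1}(I)<\sreg_i(I)=j+1$, and \Cref{cor:coextremal} returns coextremality of $\alpha_{d-i,j}(R/I)$ (for $i=1$ the converse direction reduces, via $a_0(I)=-\infty$, to the fact that $\alpha_{d-1,\,a_1(I)-1}(R/I)$ is always coextremal). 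For the set equality, \Cref{cor:coextremal} identifies $\{\sreg_1(I),\ldots,\sreg_d(I)\}$ with the set of values $j+1$ over all coextremal $\alpha_{ij}(R/I)$; the stabilization collapses the left side to $\{\sreg_1(I),\ldots,\sreg_c(I)\}=\{a_1(I),\ldots,a_c(I)\}$, while the restriction $i\le c$ makes the right side coincide with the set in the statement. Finally, a non-decreasing sequence is recovered from its jump locations and the values attained there, and the displayed equivalence reads these off from the coextremal annihilator numbers, which gives the concluding clause.

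The one step carrying real content is the stabilization $\sreg_c(I)=\reg(I)$ in the second paragraph --- equivalently, the claim that every ``south-east corner'' of the generic annihilator table sits at homological index $i\le c=\codim(I)$. This requires (and I would assume) that $R/I$ is Cohen-Macaulay, or at least that $\ginr(I)$ is supported on its first $c$ variables: otherwise $\ginr(I)$ may have a top-degree minimal generator involving some $x_m$ with $m>c$, forcing $\reg(\ginr(I)+(x_{c+1},\ldots,x_d))<\reg(\ginr(I))$ and a further jump of $\sreg$ past step $c$, hence a coextremal annihilator number with no finite axial counterpart. Under the hypothesis of \Cref{remfirstlast} the stabilization is immediate, and all the remaining assertions are formal transport along \Cref{thm:equiv} and \Cref{cor:coextremal}.
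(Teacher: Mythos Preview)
Your overall strategy is exactly the paper's: the paper's proof is the two lines ``$a_i(I)=\sreg_i(I)$ by \Cref{prop:axial=sectional reg}, so this transcribes \Cref{cor:coextremal}.'' You have, however, gone further and put your finger on a genuine gap that the paper's argument does not address.

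The transcription from \Cref{cor:coextremal} is only valid if every coextremal $\alpha_{d-i,j}(R/I)$ has $1\le i\le c$, equivalently if $\sreg_c(I)=\sreg_d(I)$. You correctly observe this is not automatic, and it can indeed fail. Take $I=(x_1^2,\,x_1x_2,\,x_1x_3,\,x_2^2,\,x_2x_3^2)\subset k[x_1,x_2,x_3]$ with $\ch(k)=0$: this is strongly stable with $I=\ginr(I)$, has $c=2$ and $a_1(I)=a_2(I)=2$, but $\reg(I)=3$, so $\sreg_2(I)=2<3=\sreg_3(I)$; one checks that $\alpha_{0,2}(R/I)$ is coextremal with no finite axial counterpart, and both the biconditional and the displayed set equality fail for this $I$.

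Your proposed Cohen--Macaulay remedy is sufficient, via \Cref{remfirstlast}. It is stronger than necessary, though: the paper's own \Cref{ex:introex} has $\operatorname{depth}(R/I)=0<3=\dim(R/I)$, yet the corollary holds there because the top-degree generator $x_3^5$ already lives in the first $c$ variables. The minimal hypothesis is $\sreg_c(I)=\reg(I)$, equivalently $\omega_c(\ginr(I))=\omega_d(\ginr(I))$ in the language of \Cref{def:alphial}. The paper's proof supplies no such hypothesis, so your caution here is warranted and sharpens the paper's treatment.
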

\begin{proof}
It follows from \Cref{prop:axial=sectional reg} that $a_i(I)=\sreg_i(I)$. Therefore this result transcribes the findings of \Cref{cor:coextremal} in the case when the two invariants agree.
\end{proof}

%Given the meaning attached to the least and greatest finite axial constants, \Cref{thm:equiv} can be interpreted as providing a range of  new invariants attached to a homogeneous ideal which generalize the usual measures of structural and homological complexity for homogeneous ideals. 

\subsection{Partial degrees}

It is well known that the regularity of a strongly stable ideal is given by the largest degree of a minimal generator. In this section we generalize this framework to capture the sectional regularity of a strongly stable ideal in terms of the largest partial degree of a minimal generator. We now proceed to define the notion of partial degree. 

\begin{defn}
\label{def:degi}
 Given a monomial $\mu=x_1^{a_1}\ldots x_d^{a_d}$ and an index $1\le i\le d$ we define the $i$-th {\em partial degree} of $\mu$ to be $\deg_{i}(\mu)=\sum_{j\le i} a_j$. This yields a grading on the polynomial ring $k[x_1,\ldots, x_d]$, which we also denote $\deg_i$.
\end{defn}

Note that $\deg_d$ is the standard grading. % An ideal that is homogeneous with respect to some $\deg_i$ need not remain homogeneous with respect to $\deg_j$ for $j\neq i$. Nevertheless, 
Monomial ideals are homogeneous with respect to $\deg_i$ for each $i$.

%We shall be able to get considerable mileage out of the following observation. 
%
%\begin{rem}
%Set $\fm=(x_1,\ldots, x_d)$. For each $i$, the finite dimensional graded vector space $I/\fm I$ uniquely determines a set of positive integers, namely the set of degrees of its elements.
%\end{rem}
%
%In particular, this observation allows us to define the alphial constants of an ideal.

\begin{defn}
\label{def:alphial}
Let $J$ be a monomial ideal of a polynomial ring $R=k[x_1,\ldots, x_d]$. For each $1\leq i\leq d$ we define 
$ \omega_i(J)=\max\{ \deg_i(\mu) \mid \mu \text{ a minimal monomial generator of } J\}$.

%\[
%\omega_i(I)=\max\{ \deg_i(\overline{f}) \mid \overline{f}\in I/(x_1,\ldots, x_d)I\}.
%\]
%If $I$ is homogeneous with respect to $\deg_i$ then $\omega_i(I)$ is the largest degree of a generator in any minimal homogeneous generating set for $I$.
\end{defn}

It is well-known that the regularity of strongly stable ideals is equal to the largest among the degrees of their minimal generators. Below we establish a version of this fact for sectional regularity.

\begin{prop}
\label{prop:sreg=alphial}
Let $J$ be a strongly stable monomial ideal of a polynomial ring. Then for $1\leq i \leq d$ one has 
$\reg(J+ (x_{i+1},\ldots,x_d))= \omega_i(J)$.
\end{prop}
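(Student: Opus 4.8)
The plan is to reduce the computation of $\reg(J + (x_{i+1},\ldots,x_d))$ to a regularity computation for a strongly stable ideal in the smaller polynomial ring $S = k[x_1,\ldots,x_i]$, where the classical Eliahou--Kervaire type result applies. First I would observe that $J + (x_{i+1},\ldots,x_d) = \bar J + (x_{i+1},\ldots,x_d)$, where $\bar J$ is the image of $J$ under the substitution $x_{i+1}\mapsto 0,\ldots, x_d\mapsto 0$, i.e. the ideal of $S$ generated by those minimal generators of $J$ involving only $x_1,\ldots,x_i$ together with monomials obtained by this truncation. Since $x_{i+1},\ldots,x_d$ is a regular sequence on $R/(\bar J + (x_{i+1},\ldots,x_d)) \cong S/\bar J$ (a free extension of rings), one has $\reg(R/(J+(x_{i+1},\ldots,x_d))) = \reg(S/\bar J)$, hence $\reg(J+(x_{i+1},\ldots,x_d)) = \reg_S(\bar J)$ where the regularity on the right is taken with respect to the standard grading of $S$.

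Next I would check that $\bar J$ is a strongly stable monomial ideal of $S = k[x_1,\ldots,x_i]$: the strongly stable exchange property for indices $\le i$ is inherited directly, since if $\mu x_j\in \bar J$ with $j\le i$ then $\mu x_j\in J$, hence $\mu x_k\in J$ for all $k<j$, and $\mu x_k$ still involves only the first $i$ variables so lies in $\bar J$. Applying the classical fact that the Castelnuovo--Mumford regularity of a strongly stable ideal equals the largest degree of a minimal generator (Eliahou--Kervaire; see \cite[Corollary~7.2.3]{HH}), we get $\reg_S(\bar J) = \max\{\deg(\nu)\mid \nu \text{ a minimal generator of } \bar J\}$.

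It remains to identify this maximum with $\omega_i(J)$. The key point is to determine the minimal generators of $\bar J$ in terms of those of $J$. A minimal generator of $J$ of the form $x_1^{a_1}\cdots x_d^{a_d}$ contributes, under the truncation, the monomial $x_1^{a_1}\cdots x_i^{a_i}$ of degree $\deg_i(\mu)$; I would argue that for each $\mu$ this truncated monomial lies in $\bar J$, and conversely every element of $\bar J$ is divisible by some such truncation, so $\bar J$ is generated by $\{x_1^{a_1}\cdots x_i^{a_i} : \mu = x_1^{a_1}\cdots x_d^{a_d} \text{ a min.\ gen.\ of } J\}$. The generator realizing $\omega_i(J)$, say $\mu^\star$ with $\deg_i(\mu^\star) = \omega_i(J)$, truncates to a monomial of degree $\omega_i(J)$; the only subtlety — and the main obstacle — is to verify this truncation is actually a \emph{minimal} generator of $\bar J$, i.e.\ is not divisible by the truncation of some other generator of strictly smaller $i$-partial degree. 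Here strong stability of $J$ does the work: if $x_1^{a_1}\cdots x_i^{a_i}$ (from $\mu^\star$) were divisible by $x_1^{b_1}\cdots x_i^{b_i}$ (from some $\nu$) then $x_1^{b_1}\cdots x_i^{b_i} x_{i+1}^{c_{i+1}}\cdots$ where we use strong stability to move the ``tail'' exponents of $\mu^\star$ down into $x_i$ shows $\mu^\star$ itself would be a non-minimal generator of $J$ unless the divisor has the same $i$-partial degree; a careful bookkeeping argument of this type, together with the fact that among truncated generators the maximal degree one cannot be a proper multiple of a strictly smaller-degree one, yields that $\omega_i(J)$ is attained by a minimal generator of $\bar J$ and that no minimal generator of $\bar J$ exceeds it. Combining the three steps gives $\reg(J+(x_{i+1},\ldots,x_d)) = \omega_i(J)$.
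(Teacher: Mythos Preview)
Your overall strategy is the same as the paper's: pass to an ideal supported in $k[x_1,\ldots,x_i]$, apply Eliahou--Kervaire there, and identify the top generator degree with $\omega_i(J)$. The paper's auxiliary ideal $L$ (generated by those minimal monomial generators of $J$ that lie in $k[x_1,\ldots,x_i]$) is exactly your $\bar J$ when the latter is taken, as you first say, to be the image of $J$ under the substitution $x_j\mapsto 0$ for $j>i$.

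The gap is in your third paragraph, where you silently replace this substitution by a different operation, the ``truncation'' $x_1^{a_1}\cdots x_d^{a_d}\mapsto x_1^{a_1}\cdots x_i^{a_i}$, and assert that $\bar J$ is generated by the truncations of all minimal generators of $J$. That is false: under the substitution, a generator divisible by some $x_j$ with $j>i$ maps to $0$, not to its truncation, and in general the truncation does not lie in $\bar J$ at all. For $J=(x_1^2,x_1x_2)\subset k[x_1,x_2]$ and $i=1$ one has $\bar J=(x_1^2)$, while the truncation of $x_1x_2$ is $x_1\notin\bar J$; if a generator lies entirely in $(x_{i+1},\ldots,x_d)$ its truncation is $1$. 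Hence your proposed generating set for $\bar J$ is wrong, and the minimality discussion built on it does not stand. What is actually required---and what the paper proves---is that any minimal generator $\mu$ of $J$ with $\deg_i(\mu)=\omega_i(J)$ already lies in $k[x_1,\ldots,x_i]$, so that $\mu$ itself is a minimal generator of $\bar J$ of degree $\omega_i(J)$. The paper obtains this by contradiction: if $x_j\mid\mu$ for some $j>i$, the strongly stable exchange $x_j\to x_i$ yields an element of $J$ whose minimal-generator divisor forces either $\mu$ to be non-minimal or $\deg_i(\mu)$ to be non-maximal.
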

\begin{proof}
Let $L$ be the ideal generated by  the minimal monomial generators of $J$ that are not divisible by $x_{i+1},\ldots,x_d$. Since $J$ is assumed strongly stable, so is $L$. Therefore by means of the Eliahou-Kervaire resolution \cite{EK} there is an equality $\reg(L)=\omega_i(L)$. 

The identity
\[
J+ (x_{i+1},\ldots,x_d))= L+(x_{i+1},\ldots,x_d))
\]
yields 
\begin{eqnarray*}
\reg\left(J + (x_{i+1},\ldots,x_d)) \right)&=& \reg\left(L+(x_{i+1},\ldots,x_d))\right)\\
&=& \reg\left(L\right) =\omega_i(L),
\end{eqnarray*}
where the second equality follows because $x_{i+1},\ldots,x_d$ form a regular sequence on $L$.
It remains to show that $\omega_i(J)=\omega_i(L)$ in order to complete the proof.
 
Suppose $\mu=x_1^{a_1},\ldots,x_i^{a_i} \mu'$ is a monomial minimal generator of $J$ so that $\mu' \not \in  (x_{1},\ldots,x_i)$ and  $\deg_i(\mu)=\omega_i(J)$. We show $\mu\in L$, equivalently, $\deg(\mu')=0$.
Suppose for the sake of contradiction that $\deg(\mu')>0$. Then, applying the strongly stable property, yields
$$x_1^{a_1},\ldots,x_i^{a_i+1} \mu'/x_j\in J$$
 for some $j>i$ so that $x_j\mid \mu'$. The displayed monomial is a multiple of a minimal generator
$$\mu''=x_1^{b_1},\ldots,x_i^{b_i} \mu''' \in J,$$
 where $b_k\le a_k$ for $1\leq k\leq i-1$, $b_{i }\leq a_i+1$, and $\mu'''\mid \mu'/x_j.$ If $b_i \le a_i$ then $\mu$ is a multiple of $\mu''$ contradicting minimality of $\mu$. Thus $b_i = a_i+1$ and  since $\mu$ has the largest $\deg_i$ among the minimal generators of $J$ we must have $b_1+\ldots b_{i-1}<a_1+\ldots+a_{i-1}.$ So, at least one of the exponents $b_k$ satisfies $b_k<a_k$. Applying the strongly stable property to $\mu''$ gives that 
$$x_1^{b_1}\ldots x_k^{b_k+1} \ldots x_i^{a_i} \mu'''\in J$$ Since $\mu$ is a multiple of the above monomial, this contradicts the minimality of $\mu$ and finishes the proof. 
\end{proof}

As a consequence of \Cref{prop:sreg=alphial} and  \Cref{prop:axial=sectional reg} we can now give a new characterization of the axial constants in terms of partial degrees.

\begin{cor}
\label{prop:axial=alphial}
%If $1\le i\le d$ and $a_i(I)$ is defined then $a_i(I)$ is the largest $\deg_i$ of a monomial in the minimal set of generators for $\gin(I)$.
Let $I$ be a homogeneous ideal of a polynomial ring and assume that $\ginr(I)$ is strongly stable.  Then for each $i$ such that $a_i(I)<\infty$ we have $a_i(I)=\omega_i(\ginr(I))$. 
\end{cor}

\subsection{$s$-reduction numbers}

The relationship between regularity and reduction numbers is well known starting with pioneering work in \cite{Vasconcelos}. The $s$-reduction numbers were introduced in \cite{NR}. The fact that reduction numbers remain unchanged under taking generic initial ideals was shown in \cite{T2} and their interpretation for Borel-fixed ideals recalled in \Cref{lem:HT} was introduced in \cite{HT}. 

\begin{defn}
\label{def:sred}
Let $A$ be a standard graded algebra over an infinite field $k$. An ideal $\fq = (\ell_1,\ldots ,\ell_s)$, where $\ell_i$ are linear forms of $A$, is called an {\em $s$-reduction} of $A$ (or of the homogeneous maximal ideal of $A$) if $\fq_t = A_t$ for $t\gg 0$. The {\em reduction number} of $A$ with respect to $\fq$ is 
$ 
r_\fq(A)=\min\{t \mid \fq_{t+1} = A_{t+1}\}.
$
The {\em $s$-reduction number} of $A$ is defined as
\[
r_s(A) = \min\{r_\fq(A) \mid \fq = (\ell_1, \ldots, \ell_s) \text{ is a reduction of }A\}.
\]
\end{defn}
 
 It is not hard to see that any $s$-reduction of $A$ satisfies $s\geq \dim(A)$. If $s=\dim(A)$ then $r(A):=r_s(A)$ is called the reduction number of $A$. If $s<\dim(A)$ then we set $r_s(A)=\infty$ by convention.

The following characterization for the $s$-reduction numbers of strongly stable monomial ideals can be found in \cite{HT}.

\begin{lem}[{\cite[Corollary 1.4]{HT}}]
\label{lem:HT}
Let  $J$ be a Borel fixed monomial ideal of a polynomial ring of dimension $d$. For any $s \geq \dim(R/J)$ we have 
$
r_s(R/J) = \min\{t \mid x_{d-s}^{t+1} \in I \}
$.
\end{lem}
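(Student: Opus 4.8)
The plan is to prove \Cref{lem:HT} by relating the $s$-reduction number of $R/J$ to a reduction coming from the last $s$ variables. The key observation is that for a Borel-fixed (equivalently, in characteristic zero or large, strongly stable) monomial ideal $J$, the generic choice of linear forms achieving the minimum in \Cref{def:sred} can be replaced, after a change of coordinates preserving the Borel-fixed property, by the specific sequence of variables $x_{d-s+1}, \ldots, x_d$. More precisely, first I would recall that by \cite{T2} the $s$-reduction number is unchanged under passing to the reverse lexicographic generic initial ideal, so it suffices to treat the case where $J$ itself is Borel-fixed; then one argues that among all $s$-reductions, the "coordinate" reduction $\fq = (x_{d-s+1}, \ldots, x_d)$ is optimal, i.e.\ $r_s(R/J) = r_\fq(R/J)$.

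The heart of the matter is then the computation $r_\fq(R/J) = \min\{t \mid x_{d-s}^{t+1} \in J\} =: \rho$ for $\fq = (x_{d-s+1},\ldots,x_d)$. By \Cref{def:sred} we need $(J + \fq)_{t+1} = R_{t+1}$, i.e.\ $R/(J+\fq)$ vanishes in degree $t+1$. Now $R/(J+\fq) \cong R''/J''$ where $R'' = k[x_1, \ldots, x_{d-s}]$ and $J''$ is the image of $J$, which is again strongly stable in the smaller ring; its regularity equals the top degree of its minimal generators, and since $R''/J''$ is what we are computing the Hilbert function of, the relevant statement is that $R''_{t+1} \subseteq J''$ for all $t+1$ exactly when $t+1 \geq \rho$. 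But $R''_{\rho} \subseteq J''$ is equivalent, by the strongly stable exchange property applied exactly as in \Cref{lem:addvars}, to $x_{d-s}^{\rho} \in J''$, which is in turn equivalent to $x_{d-s}^\rho \in J + (x_{d-s+1},\ldots,x_d)$, and again by \Cref{lem:addvars} (with $i = d-s$) this holds iff $x_{d-s}^\rho \in J$. Thus $\min\{t \mid (J+\fq)_{t+1} = R_{t+1}\} = \min\{t \mid x_{d-s}^{t+1} \in J\}$, which is the claimed formula.

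The step I expect to be the main obstacle is justifying that the coordinate reduction $\fq = (x_{d-s+1}, \ldots, x_d)$ actually achieves the minimum $r_s(R/J)$, rather than just giving an upper bound $r_s(R/J) \leq r_\fq(R/J)$. The clean way to handle this is to invoke the genericity of the initial ideal: a generic $s$-reduction of $R/J$ corresponds, after the associated change of coordinates (which one can take in the Borel subgroup since $J$ is Borel-fixed, so it fixes $J$), to the linear forms $\ell_i$ spanning a generic codimension-$(d-s)$ subspace; passing to the generic initial ideal of $J + (\ell_{d-s+1},\ldots,\ell_d)$ and using \Cref{prop:properties}(1) to preserve Hilbert functions, one reduces to the coordinate subspace without increasing the reduction number. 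Alternatively, and perhaps more transparently for a monomial ideal, one notes that for any $s$-reduction $\fq$ the quotient $R/(J+\fq)$ has Hilbert function bounded below (degreewise) by that of $R/(J + (x_{d-s+1},\ldots,x_d))$ — because reducing modulo the last variables is the "greediest" way to kill monomials in a strongly stable ideal — so $r_\fq(R/J) \geq r_{(x_{d-s+1},\ldots,x_d)}(R/J)$, giving equality. Making this comparison of Hilbert functions fully rigorous, via the strongly stable exchange property, is the technical crux, but it parallels \Cref{lem:addvars} closely and should go through without difficulty.
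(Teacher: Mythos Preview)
The paper does not give a proof of \Cref{lem:HT}; it is quoted as \cite[Corollary 1.4]{HT} and used as a black box in the proof of \Cref{prop:axial=red}. There is therefore no argument in the paper against which to compare your proposal.

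As for the sketch itself: your computation of $r_\fq(R/J)$ for $\fq=(x_{d-s+1},\ldots,x_d)$ via \Cref{lem:addvars} is correct, but \Cref{lem:addvars} requires $J$ to be strongly stable, whereas the lemma is stated for Borel-fixed $J$. In positive characteristic these notions differ, and the formula as written actually fails for the Borel-fixed but not strongly stable ideal $J=(x_1^p,\ldots,x_d^p)$ of \Cref{ex:charp}: there $r_s(R/J)=(p-1)(d-s)$ while $\min\{t\mid x_{d-s}^{t+1}\in J\}=p-1$. So either the transcription of \cite[Corollary 1.4]{HT} in this paper carries an implicit strongly-stable (or characteristic-zero) hypothesis, in which case your argument is essentially complete once the semicontinuity step is made precise, or else your reliance on \Cref{lem:addvars} is a genuine obstruction to handling the general Borel-fixed case and a different mechanism (the $p$-Borel combinatorics used in \cite{HT}) is required. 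For the minimality of the coordinate reduction, your first suggestion is the right one: the Hilbert function of $R/(J+\fq')$ for general $\fq'$ is pointwise minimal by semicontinuity, and for Borel-fixed $J$ it coincides with that of $R/(J+(x_{d-s+1},\ldots,x_d))$ by exactly the argument of \Cref{prop:regIvsgin(I)}, since $\ginr(J)=J$.
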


Now we can give a characterization of the axial constants in terms of $s$-reduction numbers.

\begin{prop}
\label{prop:axial=red}
Let $I$ be a homogeneous ideal of a polynomial ring $R=k[x_1, \ldots, x_d]$. For any $0\leq i\leq d$ we have 
$
a_i(I)=r_{d-i}(R/I)+1.
$
\end{prop}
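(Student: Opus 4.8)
The plan is to reduce the claim to the already-established machinery on Borel-fixed monomial ideals. First I would invoke the fact, due to Trung \cite{T2} and recalled in the discussion preceding \Cref{def:sred}, that $s$-reduction numbers are unchanged under passage to the (reverse lexicographic) generic initial ideal: $r_{d-i}(R/I)=r_{d-i}(R/\ginr(I))$. Since $\ginr(I)$ is Borel-fixed by \Cref{prop:properties}(4) regardless of the characteristic, this puts us in a position to apply \Cref{lem:HT} with $J=\ginr(I)$ and $s=d-i$, which gives
\[
r_{d-i}(R/\ginr(I))=\min\{t \mid x_{d-(d-i)}^{t+1}\in \ginr(I)\}=\min\{t \mid x_i^{t+1}\in \ginr(I)\}.
\]

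Next I would simply match this against \Cref{def:axial}: by definition $a_i(I)=\min\{j\in\N \mid x_i^j\in \ginr(I)\}$, so writing $j=t+1$ yields $a_i(I)=\min\{t \mid x_i^{t+1}\in\ginr(I)\}+1 = r_{d-i}(R/\ginr(I))+1$. Combining the two displayed identities gives $a_i(I)=r_{d-i}(R/I)+1$, as claimed. A couple of boundary cases need a brief word to cover the full range $0\le i\le d$: when $i=0$ we have $d-i=d>\dim(R/I)$ is impossible only if $\dim(R/I)=d$, i.e. $I=0$; in general one should check that the convention $r_s=\infty$ for $s<\dim(R/I)$ matches the convention $a_i=\infty$ when no power of $x_i$ lies in $\ginr(I)$. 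By \Cref{prop:properties}(3) the associated primes of $\ginr(I)$ are $(x_1,\dots,x_j)$ with $\codim(I)\le j\le d-\operatorname{depth}(I)$, so $x_i\in\sqrt{\ginr(I)}$ exactly when $i\le\codim(I)$, equivalently $d-i\ge d-\codim(I)=\dim(R/I)$; this is precisely the range where $r_{d-i}(R/I)$ is finite, so the two infinite conventions agree as well.

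I do not expect a serious obstacle here: the statement is essentially a bookkeeping combination of \Cref{lem:HT}, the gin-invariance of reduction numbers, and \Cref{def:axial}. The one point requiring a little care is ensuring that \Cref{lem:HT} (stated for $s\ge\dim(R/J)$) and the invariance result from \cite{T2} are being applied with the correct hypotheses — in particular that no characteristic assumption is smuggled in, since the theorem advertises this equality as characteristic-independent. Since Borel-fixedness of $\ginr(I)$ holds in all characteristics (\Cref{prop:properties}(4)), and \Cref{lem:HT} is stated for Borel fixed ideals rather than strongly stable ones, the argument indeed goes through without restriction on $\ch(k)$, which is exactly what \Cref{thm:equiv} asserts for the equivalence (4)=(5).
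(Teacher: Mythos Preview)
Your proposal is correct and follows essentially the same approach as the paper's proof: both invoke Trung's invariance $r_s(R/I)=r_s(R/\ginr(I))$, apply \Cref{lem:HT} to the Borel-fixed ideal $\ginr(I)$, and match the resulting formula with \Cref{def:axial}, handling the infinite case via the structure of associated primes of $\ginr(I)$. Your treatment is arguably slightly cleaner on the characteristic point, since you correctly note that \Cref{lem:HT} only requires Borel-fixedness (which holds in all characteristics), whereas the paper's proof phrases this step as ``since $\ginr(I)$ is a strongly stable monomial ideal'' and cites \Cref{lem:finiteaxial} (which is stated under a strongly stable hypothesis) for the infinite case.
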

\begin{proof}
 From \Cref{lem:finiteaxial} and the above considerations it follows that $a_i(I)=\infty$ if and only if $ i>\codim(I)$  if and only if $d- i<\dim(R/I)$ if and only if $r_{d-i}(R/I)=\infty$.

Now assume $ i \leq \codim(I)$, whence $d- i \geq \dim(R/I)$. Since $\ginr(I)$ is a strongly stable monomial ideal, \Cref{lem:HT} gives 
\[
r_{d-i}(R/\ginr(I)) = \min\{t \mid x_{i}^{t+1} \in \ginr(I) \} = a_i(I)-1.
\]
It was shown in \cite{T2} that $r_s(R/I) = r_s(R/\ginr(I))$, which finishes the proof.
\end{proof}

\section{Growth of the equivalent invariants for powers of ideals}
\label{s:linear}

Applying a fixed algebraic invariant $\theta$ to a family of homogeneous ideals $\cF=\{I_n\}_{n\geq 1}$ gives rise to 
a function $n\mapsto \theta(I_n)$. A natural endeavor is to study the growth of such functions.

%\subsection{Linear growth}
In this section we fix a homogeneous ideal $I$ of a polynomial ring and focus on the sequence of powers of $I$, namely $\cF=\{I^n\}_{n\geq 1}$. It is well known that the regularity of powers of $I$ is given by an eventually linear function $\reg(I^n)=dn+e$ for $n\gg0$ by the celebrated work of Cutkosky--Herzog--Trung \cite{CHT} and Kodyialam \cite{Kod}.
We use this to show below that applying each of the invariants listed in \Cref{thm:equiv} to  $\cF$ also yields an eventually linear function.

\begin{thm}
\label{thm:linear}
Let $I$ be a homogeneous ideal of a polynomial ring $R=k[x_1, \ldots, x_d]$ with $k$ an infinite  field of characteristic zero or larger than any exponent appearing in the generators of the reverse lexicographic generic initial ideal $\ginr(I)$. Fix an integer $0\leq i\leq d$. Then there exist nonnegative integers $d,e\in\N$ such that $ \sreg_i(I^n)=dn+e$  for $n\gg 0.$
%\begin{enumerate}
%\item $f(n)= \sreg_i(I^n)$.
%\item $f(n)=\reg(\ginr(I^n) + (x_{i+1},\ldots,x_d))$ for a fixed integer $0\leq i\leq d$,
%\item $f(n)=a_i(I^n)$ for a fixed integer $0\leq i\leq \codim(I)$,
%\item $f(n)=$the largest $i$-th partial degree of a minimal generator of $\ginr(I^n)$. 
%\item $f(n)=\red_{d-i}(R/I^n)$ for a fixed integer $0\leq i\leq \codim(I)$.
%\end{enumerate}
%Then there exist nonnegative integers $d,e\in\N$ such that $f(n)=dn+e$  for $n\gg 0.$
\end{thm}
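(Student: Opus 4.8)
The plan is to reduce the statement to the known eventual linearity of Castelnuovo--Mumford regularity for powers, using the machinery of generic linear forms from \Cref{not:generic} and \Cref{prop:regIvsgin(I)generic}. First I would dispose of the easy range: if $i < \operatorname{ht}(I^n) = \operatorname{ht}(I)$ the relevant sectional regularity behaves like a regularity of a positive-dimensional quotient, and more to the point, by \Cref{cor:sregmax} the quantity $\sreg_i(I^n)$ is always finite (unlike $a_i(I^n)$), so there is nothing degenerate to handle; and if $i=d$ then $\sreg_d(I^n)=\reg(I^n)+1$, which is eventually linear by Cutkosky--Herzog--Trung \cite{CHT} and Kodiyalam \cite{Kod}, giving the base case. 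The main work is the intermediate range $0 \le i \le d-1$.

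The key idea is that \Cref{prop:regIvsgin(I)generic} lets us compute $\sreg_i(I^n)$ inside the \emph{fixed} ring $R' = F[x_1,\dots,x_d]$ with the \emph{single, $n$-independent} generic flag $L_1,\dots,L_d$: indeed $\sreg_i(I^n) = \reg\big((I^n)' + (L_{i+1},\dots,L_d)\big) = \reg\big((I')^n + (L_{i+1},\dots,L_d)\big)$, because extension of scalars commutes with taking powers. Now set $\bar R = R'/(L_{i+1},\dots,L_d)$, which is again a polynomial ring in $i$ variables over $F$ (the $L_j$ being part of a generic linear system of parameters), and let $\bar I = $ the image of $I'$ in $\bar R$. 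Then $(I')^n + (L_{i+1},\dots,L_d)$ has the same regularity as $\bar I^{\,n}$ viewed in $\bar R$ — here one must be careful that $\reg$ of an ideal of $R'$ that contains $(L_{i+1},\dots,L_d)$, which is a regular sequence on $R'$, equals the regularity of its image in the quotient $\bar R$; this is the standard fact that regularity is unchanged modulo a regular sequence of linear forms, applied as in the proof of \Cref{prop:sreg=alphial}. Thus $\sreg_i(I^n) = \reg_{\bar R}(\bar I^{\,n})$ for all $n \ge 1$.

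Having arrived at this reduction, I would invoke Cutkosky--Herzog--Trung and Kodiyalam once more, now over the base field $F$ and the polynomial ring $\bar R$: the function $n \mapsto \reg_{\bar R}(\bar I^{\,n})$ is eventually linear, say $\reg_{\bar R}(\bar I^{\,n}) = a n + b$ for $n \gg 0$ with $a, b \in \Z$, and $a \ge 0$, $b$ can be taken nonnegative once $n$ is large (since regularity is nonnegative and the slope is nonnegative). Renaming $a,b$ as the $d,e$ of the statement completes the proof. One small point to verify is that $\bar I$ is genuinely an ideal to which \cite{CHT, Kod} applies, i.e. a homogeneous ideal of a standard graded polynomial ring over an infinite (indeed here infinite, since $F$ contains $k$ which we assumed infinite) field; this is immediate.

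The step I expect to be the main obstacle is the careful bookkeeping in the reduction $\reg\big((I')^n + (L_{i+1},\dots,L_d)\big) = \reg_{\bar R}(\bar I^{\,n})$: one needs that the generic forms $L_{i+1},\dots,L_d$ form a regular sequence on $R'/(I')^n$ for \emph{every} $n$ simultaneously — or at least that they can be used to pass to the quotient without changing regularity — and the clean way to get this uniformly in $n$ is exactly the advantage of generic (rather than merely general) forms advertised in \Cref{not:generic}, namely that the same forms are almost regular on every finitely generated $R'$-module, combined with \Cref{lem:almostreg} to control regularity one form at a time. Once that uniform statement is in hand, everything else is an appeal to \Cref{prop:regIvsgin(I)generic} and to the theorem of Cutkosky--Herzog--Trung and Kodiyalam, so no genuinely new ideas are required beyond organizing these tools.
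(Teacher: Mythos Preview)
Your proposal is correct and follows essentially the same route as the paper: pass to the generic extension $R'$ via \Cref{prop:regIvsgin(I)generic}, reduce modulo the fixed generic linear forms $L_{i+1},\dots,L_d$ to a polynomial ring $\bar R$ in fewer variables, identify $\sreg_i(I^n)$ with $\reg_{\bar R}(\bar I^{\,n})$, and apply \cite{CHT,Kod}. Two minor remarks: the preliminary case analysis is unnecessary (the main argument handles all $0\le i\le d$ uniformly), and $\sreg_d(I^n)=\reg(I^n)$ rather than $\reg(I^n)+1$, though this slip does not affect the argument.
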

\begin{proof}
By \Cref{thm:equiv} we have $\sreg_i(I^n)=\reg(\ginr(I^n) + (x_{i+1},\ldots,x_d))$, so it suffices to prove the claim for the  function $f(n)=\reg(\ginr(I^n) + (x_{i+1},\ldots,x_d))$.
As in \Cref{not:generic}, let $F={\rm Frac}(k[u_{ij}])$, and set $R'=F\otimes_k R=F[x_1,\ldots, x_d], I'=IR'$ and 
$L_i=\sum_{j=1}^d u_{ij}x_j$.  By \Cref{prop:regIvsgin(I)generic} we have the following identities
\begin{equation}
\label{eq:2}
f(n)=\reg(\ginr(I^n) + (x_{i+1},\ldots,x_d))=\reg(I^nR' + (L_{i+1},\ldots,L_d)).
\end{equation}
Now set $\overline{R'}=R'/(L_{i+1},\ldots,L_d)$ and $\overline{I'}=\left(I'+(L_{i+1},\ldots,L_d)\right)/(L_{i+1},\ldots,L_d)$ and note that 
\begin{eqnarray*}
\overline{I'}^{\, n}&=&\left(I'+(L_{i+1},\ldots,L_d)\right)^n/(L_{i+1},\ldots,L_d) \\
&=&\left( I'^{\, n}+(L_{i+1},\ldots,L_d) \right)/(L_{i+1},\ldots,L_d) \\
&=&\left( I^nR'+(L_{i+1},\ldots,L_d) \right)/(L_{i+1},\ldots,L_d) .
\end{eqnarray*}
Thus there is a graded ring isomorphism
\[
\frac{\overline{R'}}{\overline{I'}^{\, n}} \cong \frac{R'}{I^nR'+(L_{i+1},\ldots,L_d)},
\]
which yields equalities 
\[
\reg\left(R'/\overline{I'}^{\, n}\right)=\reg\left(R'/(I^nR'+(L_{i+1},\ldots,L_d))\right) \qquad \text{and} \qquad \reg\left(\overline{I'}^{\, n}\right) =f(n).\] 
Now the eventual linearity of $f(n)$ follows from the results on regularity of powers in \cite{CHT, Kod} applied to $I'$. 
\end{proof}

Naturally, it follows that the other invariants listed \Cref{thm:equiv} also grow linearly.
\begin{cor}
Under the assumptions of \Cref{thm:linear}  the following functions are eventually linear:
\begin{enumerate}
\item $f(n)=a_i(I^n)$ for a fixed integer $0\leq i\leq \codim(I)$,
\item $f(n)=$the largest $i$-th partial degree of a minimal generator of $\ginr(I^n)$. 
\item $f(n)=\red_{d-i}(R/I^n)$ for a fixed integer $0\leq i\leq \codim(I)$.
\end{enumerate}
\end{cor}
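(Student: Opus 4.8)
The plan is to deduce all three statements as consequences of \Cref{thm:linear} together with the equalities recorded in \Cref{thm:equiv}, so that essentially no new work beyond bookkeeping is required. Recall that \Cref{thm:equiv} asserts, for each fixed homogeneous ideal $J$ and each index $i$, that $\sreg_i(J)$, the largest $i$-th partial degree of a minimal generator of $\ginr(J)$, $a_i(J)$ (when finite), and $\red_{d-i}(R/J)+1$ (when finite) all coincide. Applying this with $J=I^n$ for each $n$, one obtains the identities of functions of $n$: $\sreg_i(I^n)=a_i(I^n)=\bigl(\text{largest }i\text{-th partial degree of a minimal generator of }\ginr(I^n)\bigr)=\red_{d-i}(R/I^n)+1$, valid for all $n$ large enough that the finiteness hypotheses hold.

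The first point to dispatch is the finiteness issue. For (1) and (3), we restrict to $0\le i\le\codim(I)$; but one must check that $\codim(I^n)=\codim(I)$ for all $n\ge1$, which is immediate since $I$ and $I^n$ have the same radical, hence the same minimal primes and the same height. Thus by \Cref{lem:finiteaxial} the axial constant $a_i(I^n)$ is finite for every $n$ precisely when $1\le i\le\codim(I)$ (and $a_0(I^n)=-\infty$ is handled trivially, or one simply assumes $i\ge1$), and likewise $\red_{d-i}(R/I^n)<\infty$ in this range by the discussion preceding \Cref{def:sred} and \Cref{prop:axial=red}. For (2) no finiteness restriction is needed, since the largest $i$-th partial degree of a minimal generator of a nonzero monomial ideal is always a well-defined natural number.

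With finiteness in hand, each of the three functions equals, on the nose, an affine shift of $\sreg_i(I^n)$: function (1) equals $\sreg_i(I^n)$, function (2) equals $\sreg_i(I^n)$, and function (3) equals $\sreg_i(I^n)-1$. By \Cref{thm:linear} there are nonnegative integers $a,b$ (renaming to avoid the clash with the ring's dimension $d$) with $\sreg_i(I^n)=an+b$ for $n\gg0$; therefore functions (1) and (2) equal $an+b$ for $n\gg0$, and function (3) equals $an+(b-1)$ for $n\gg0$, each of which is eventually linear. This completes the proof.

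I do not expect any genuine obstacle here, as the corollary is purely a transcription of \Cref{thm:linear} through \Cref{thm:equiv}; the only point requiring a moment's care is the stability of the codimension along the family $\{I^n\}$, needed to guarantee that the relevant invariants stay finite uniformly in $n$, and one should also note that the hypothesis on $\ch(k)$ in \Cref{thm:equiv} is inherited because the generators of $\ginr(I^n)$ impose the same kind of bound as those of $\ginr(I)$—indeed the characteristic hypothesis in \Cref{thm:linear} is already stated so as to cover every $\ginr(I^n)$, so nothing extra is needed.
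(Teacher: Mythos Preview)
Your approach is exactly what the paper intends: the corollary is stated there without proof, as an immediate consequence of \Cref{thm:linear} via the identifications in \Cref{thm:equiv}, and your bookkeeping (stability of $\codim(I^n)=\codim(I)$, the resulting finiteness, and the $-1$ shift for reduction numbers) fills in precisely the details the paper suppresses.

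One small correction to your final paragraph: the claim that ``the characteristic hypothesis in \Cref{thm:linear} is already stated so as to cover every $\ginr(I^n)$'' is not accurate as written. The hypothesis there only bounds the characteristic against the exponents in $\ginr(I)$, not in $\ginr(I^n)$ for all $n$, and those exponents typically grow with $n$; so in positive characteristic your invocation of \Cref{thm:equiv} for each $I^n$ is not literally justified by the stated assumption. In characteristic zero there is of course no issue, and this wrinkle is really a looseness in the paper's own formulation of the hypotheses rather than a flaw specific to your argument.
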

This result has been previously shown for the reduction numbers $\red(R/I^n)=\red_{d-\codim(I)}(R/I^n)$ in \cite{Hoa}. To our knowledge the remaining assertions of the above corollary are new.

\subsection*{Acknowledgements} 
This work was completed in the framework of the 2021 Polymath Jr. program (\href{https://geometrynyc.wixsite.com/polymathreu}{geometrynyc.wixsite.com/polymathreu}) supported by NSF award DMS--2113535. The eighth author was supported by NSF grant DMS--2101225.

We thank Robert M.~Walker for useful remarks and for sharing his preprint \cite{Walker} with us.

\bigskip

\bigskip

\end{document}